\newcommand  \slice {\mathrel{/}}
\newcommand \G {\mathbb{G}}
\newfunction \ev       {ev}
\newfunction \id       {id}
\newfunction \filtered {filt}
\newfunction \Fun      {Fun}
\newfunction \Ind      {Ind}
\newfunction \Map      {Map}
\newfunction \Sheaf    {Sh}
\newfunction \Presheaf {PSh}
\newfunction \Mod      {Mod}
\newfunction \target   {tar}
\newfunction \source   {src}
\newfunction \End      {End}
\newfunction \RFib     {RFib}
\newfunction \forget   {fgt}
\newfunction \coCart   {coCart}
\newfunction \Object   {Ob}
\newfunction \Nat      {Nat}
\newfunction \Alg      {Alg}
\newfunction \Aut      {Aut}
\newfunction \GL       {GL}
\newconstant \Kan {Kan}
\newfunction \Spec {Spec}
\newfunction \PrL {Pr^L}
\newconstant \FP {FP}
\newconstant \point  {1}
\newconstant \Cat    {Cat}
\newconstant \Topos  {Topos}
\newconstant \Space  {Spc}
\newconstant \Set    {Set}
\newconstant \Sketch {Sketch}
\newconstant \Idem {Idem}
\newconstant \Scheme {Sch}
\newcommand \A {\mathbb{A}}
\newfunction \canonical {can}
\newfunction \RP {RP}
\newfunction \Rel {Rel}
\newfunction \Kon {Kon}
\newfunction \Yoneda {yo}
\newfunction \Fib {Fib}
\newconstant \Sm {Sm}
\newfunction \mori {Morita}
\newfunction \Atlas {At}
\newfunction \DM {Stk^{DM}}
\newfunction \Stack {Stk}
\newfunction \Segal {Seg}
\newfunction \symb {\Sigma}
\newcommand \pathdef [3] {%
  \expandafter\def\csname @#1\endcsname{\cite[#3]{#2}}%
}
\newcommand \pathcite [1] {%
  \ifcsdef{@#1}{%
    \csname @#1\endcsname%
  }{%
    \PackageWarning{paths}{Path "#1" not found}%
  }%
}
\title{Fibrations and Koszul duality in locally Cartesian localisations}
\author{Andrew W. Macpherson}
\begin{document}

\maketitle

\begin{abstract}

  I show that any locally Cartesian left localisation of a presentable $\infty$-category admits a right proper model structure in which all morphisms are cofibrations, and obtain a Koszul duality classification of its fibrations.
  By a simple criterion in terms of generators for a localisation to be locally Cartesian, this applies to any nullification functor. 
  In particular, it includes examples with non-trivial `homotopical content'.
  
  We further describe, and provide examples from, the set of fibrations in three contexts: the higher categorical Thomason model structure of Mazel-Gee, where fibrations are local systems; Morel-Voevodsky $\A^1$-localisation, where they are a higher analogue of $\A^1$-covering spaces; and the Quillen plus construction, where they are related to loop space modules trivialised over the universal acyclic extension.  
  
\end{abstract}

\tableofcontents
\section{Introduction}

The concept of model $\infty$-categores was introduced in a series of papers starting with \cite{mazelgee2015modela} to address the need for structures to get control over localisations of $\infty$-categories.
The material of \emph{op.~cit}.~allows us to make arguments about model $\infty$-categories that are largely identical to their 1-category cousins; yet describing interesting practical examples is often much easier, essentially because the subtleties of, say, localising from presheaves of simplicial sets to presheaves of spaces, has already been dispensed with.

In this paper we concern ourselves with a class of model structures on $\infty$-categories which are relatively trivial to construct and work with: model structures that present a colimit-preserving localisation --- so weak equivalences are stable under colimit --- and in which all morphisms are cofibrations.

\begin{theorem}[Thm.~\ref{bousfield/model/exists}]

  Let $C$ be a presentable $\infty$-category, $L:C\rightarrow D$ a left Bousfield localisation (i.e.~a functor admitting a fully faithful right adjoint).
  Then there is a left proper cofibrantly generated model structure on $C$ --- the \emph{left localisation model structure} --- such that:
  \begin{itemize}
    \item[(C)] All morphisms of $C$ are cofibrations;
    \item[(W)] Weak equivalences are exactly the $L$-equivalences.
    \item[(F)] Fibrations are right orthogonal to $L$-equivalences. That is, they satisfy a \emph{unique} right lifting property.
  \end{itemize}
  All morphisms in $D$ are fibrations, and conversely, any fibration with codomain in $D$ has domain in $D$. In particular, $D$ is the category of fibrant objects.
  
\end{theorem}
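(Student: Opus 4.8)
The plan is to fix the three classes of maps by hand and then recognise the pair (acyclic cofibrations, fibrations) as an \emph{orthogonal} factorisation system, which is what forces the \emph{unique} lifting property in (F). Write $S$ for the class of $L$-equivalences and $S^{\perp}$ for the class of maps enjoying the unique right lifting property against every member of $S$. I declare the cofibrations to be all morphisms of $C$, the weak equivalences to be $S$, and the fibrations to be $S^{\perp}$, so that (C), (W), (F) hold by definition and the whole content is the verification of the model axioms. Since every map is a cofibration, the acyclic cofibrations are exactly $S$, whereas the acyclic fibrations are the maps with the unique right lifting property against \emph{all} morphisms, i.e.\ precisely the equivalences of $C$. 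The two weak factorisation systems to be produced are therefore $(\text{all maps},\,\text{equivalences})$ and $(S,\,S^{\perp})$. The first is immediate: any $f$ factors as $f=\id\circ f$ with $\id$ an (acyclic) fibration, every map lifts uniquely against an equivalence, and the equivalences are exactly the maps right orthogonal to all maps.

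The substance lies in exhibiting $(S,S^{\perp})$ as an orthogonal factorisation system, and this is the step I expect to be the main obstacle. The key input is presentability: because $L$ is a left adjoint it preserves colimits, so $S$ is closed under pushout, composition and retracts and contains the equivalences, i.e.\ $S$ is strongly saturated; and because a Bousfield localisation of a presentable $\infty$-category is accessible, $S$ is generated under strong saturation by a \emph{small} set of maps, which I would extract using \pathcite{cis/localization/bousfield/criterion}. Given a strongly saturated class of small generation, the general theory of factorisation systems in presentable $\infty$-categories yields the orthogonal factorisation $A\xrightarrow{\,w\,}E\xrightarrow{\,p\,}B$ with $w\in S$ and $p\in S^{\perp}$ for every morphism $f$, and identifies $S$ with the left-orthogonal complement $\,{}^{\perp}(S^{\perp})$. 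This simultaneously supplies the missing factorisation and the characterisation of acyclic cofibrations as $S={}^{\perp}(S^{\perp})$, completing the $(S,S^{\perp})$ weak factorisation system. It is precisely here that accessibility (equivalently, the small-object argument) is indispensable and the construction would break for a non-accessible localisation.

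With both factorisation systems in hand, the remaining axioms are routine. Two-out-of-three and closure under retracts for $S$ are inherited from the equivalences of $D$ through the functor $L$. \textbf{Left properness} is where the left-adjoint hypothesis pays off a second time: the cobase change of an $L$-equivalence along an arbitrary map is again an $L$-equivalence because $L$ preserves pushouts, so cobase change of a weak equivalence along a cofibration is a weak equivalence. \textbf{Cofibrant generation} follows by taking, for the acyclic cofibrations, the small generating set for $S$ produced above, and, for the cofibrations, the set of morphisms between $\kappa$-compact objects for a suitable regular cardinal $\kappa$, which generates all maps as a saturated class by presentability. One should note that the resulting fibrations satisfy a unique (not merely existential) lifting property, reflecting that $(S,S^{\perp})$ is orthogonal rather than merely a weak factorisation system.

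It remains to identify the fibrant objects. An object $X$ is fibrant iff $X\to\point$ lies in $S^{\perp}$, i.e.\ iff for every $L$-equivalence $U\to V$ the restriction $\Map(V,X)\to\Map(U,X)$ is an equivalence; this is exactly the condition that $X$ be $S$-local, so the fibrant objects are precisely the objects of $D$, using \pathcite{htt/bousfield-localization/map-out} to identify $D$ with the full subcategory of local objects. That \emph{every} morphism of $D$ is a fibration is then a short mapping-space computation: for $p\colon X\to Y$ with $X,Y\in D$ and any $L$-equivalence $U\to V$, both $\Map(V,X)\to\Map(U,X)$ and $\Map(V,Y)\to\Map(U,Y)$ are equivalences, so the comparison map to $\Map(U,X)\times_{\Map(U,Y)}\Map(V,Y)$ is an equivalence and $p\in S^{\perp}$. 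For the converse, if $p\colon X\to Y$ is a fibration with $Y\in D$, then $X\to\point$ factors as $X\xrightarrow{p}Y\to\point$ with both maps in $S^{\perp}$; since the right class of an orthogonal factorisation system is closed under composition, $X\to\point\in S^{\perp}$, whence $X\in D$. This establishes that $D$ is exactly the category of fibrant objects and completes the proof.
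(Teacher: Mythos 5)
Your overall architecture is sound, and several pieces are exactly right: the reduction to two factorisation systems, the identification of the acyclic fibrations with the equivalences, left properness from $L$ preserving pushouts, and the identification of the fibrant objects with $D$. Even the identity $S={}^{\perp}(S^{\perp})$ needs no presentability: maps between local objects lie in $S^{\perp}$, so any $f\colon X\rightarrow Y$ in ${}^{\perp}(S^{\perp})$ is orthogonal to $A\rightarrow 1$ for every local $A$, which says precisely that $\Map(Y,A)\rightarrow\Map(X,A)$ is an equivalence for all local $A$, i.e.\ $f\in W_L$. The genuine gap is the step you yourself flag as the crux: the existence of factorisations $f=p\circ w$ with $w\in S$ and $p\in S^{\perp}$. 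You assert this follows from ``the general theory of factorisation systems in presentable $\infty$-categories'' applied to a small set generating $S$ under strong saturation, but the available general theorem --- that a small set of morphisms in a presentable $\infty$-category generates a factorisation system, \cite[Prop.~5.2.8.17]{HTT} --- does not deliver it. Applied to a small generating set $S_0$ of the localisation, it produces the pair $({}^{\perp}(S_0^{\perp}),\,S_0^{\perp})$; the left class of a factorisation system is closed under composition and right cancellation but \emph{not} under two-out-of-three, so it is in general strictly smaller than the strong saturation $S$, and dually $S_0^{\perp}$ is strictly larger than $S^{\perp}$. Concretely, in $C=\Set$ the localisation onto $\{\emptyset,\ast\}$ is generated by the fold map $\ast\sqcup\ast\rightarrow\ast$; the factorisation system this set generates is (surjections, injections), yet $\ast\rightarrow\ast\sqcup\ast$ is an $L$-equivalence that is not a surjection, and it is an injection that is not an $L$-fibration (it has no lift against itself). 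So ``factor using the generated factorisation system'' produces the wrong classes, and your cofibrant-generation claim inherits the same defect: a set generating $L$ as a Bousfield localisation generally does not generate $W_L$ as a class of trivial cofibrations, which is exactly the conflation the paper warns against in Remark~\ref{thomason/generator}.

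The statement you need --- for $S$ strongly saturated \emph{of small generation}, $(S,S^{\perp})$ is an orthogonal factorisation system --- is in fact true, but its proof is essentially the content of the theorem, and it is what the paper's own argument supplies. One must take the much larger generating set $J=W_L\cap (C^{\kappa})^{\Delta^1}$ of \emph{all} $L$-equivalences between $\kappa$-compact objects, for $\kappa$ such that $R$ is $\kappa$-accessible, and prove: (i) $J$-cell complexes lie in $W_L$, by closure of $W_L$ under pushout and transfinite composition; and (ii) $J^{\perp}=W_L^{\perp}$, which requires knowing that $W_L$ is accessible as a full subcategory of $C^{\Delta^1}$ --- so that every $L$-equivalence is a $\kappa$-filtered colimit of elements of $J$ --- together with closure of orthogonality under colimits in the arrow category; the upgrade from weak to unique lifting is Proposition~\ref{bousfield/fibration/orthogonality}. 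The paper packages (i) and (ii) as the hypothesis $(I\mathrm{-cof})\cap W_L\subseteq J\mathrm{-cof}$ of Mazel-Gee's recognition theorem \cite[Thm.~3.11]{mazelgee2015modela}, with $I=(C^{\kappa})^{\Delta^1}$ and $J=W_L\cap I$. Once this step is supplied, your proposal does become a correct proof, and a genuinely different one --- orthogonal-factorisation-system-first rather than recognition-theorem-first --- but as written the decisive step is an appeal to a theorem that does not exist in citable form.
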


In this paper, we focus on the case of \emph{locally Cartesian} localisations \cite{gepner2017univalence}.
A generating set criterion means that it is easy to construct examples in this context; on the other hand, a criterion to be a fibration helps us to get a grip on this class of maps.

\begin{theorem}[Props.~\ref{locally-cartesian/criterion}, \ref{locally-cartesian/generator}, \ref{locally-cartesian/unit-generates}]

  Let $C$ be an $\infty$-topos, $L:C\rightarrow D$ an accessible localisation.
  The following conditions are equivalent:
  \begin{enumerate}
    \item 
      $L$ is locally Cartesian;
      
    \item 
      the left localisation model structure of $L$ is right proper;
      
    \item 
      (if every arrow in $D$ is exponentiable in $C$) $L$ is generated as a Bousfield localisation by a set of morphisms stable for base change.
  \end{enumerate}
  In this case,
      a morphism $X\rightarrow Y$ in $C$ is an $L$-fibration if and only if the square
      \[
        \begin{tikzcd}
          X \ar[r] \ar[d] & LX \ar[d] \\
          Y \ar[r] & LY 
        \end{tikzcd}
      \]
      is a pullback.
  
\end{theorem}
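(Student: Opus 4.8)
The plan is to prove the fibration criterion first, using only the right-properness furnished by Theorem~\ref{bousfield/model/exists} together with condition (F) and the fact that every morphism of $D$ is a fibration; then to establish the equivalences, treating $(1)\Rightarrow(2)$ and $(1)\Leftrightarrow(3)$ directly and isolating $(2)\Rightarrow(1)$ as the crux. Throughout write $W$ for the class of $L$-equivalences, so that by (W)--(F) the weak equivalences are $W$ and the fibrations are the right-orthogonal class $W^{\perp}$ (unique right lifting).

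For the criterion, take $p\colon X\to Y$ and form the pullback $P=Y\times_{LY}LX$, with projections $\pi\colon P\to Y$, $q\colon P\to LX$ and the canonical comparison $c\colon X\to P$ induced by $p$ and $\eta_X$. Since $Lp\colon LX\to LY$ is a morphism of $D$ it is a fibration, and $\pi$ is its base change along $\eta_Y$, hence also a fibration; so if $c$ is an equivalence then $p\simeq\pi$ is a fibration---this already gives the ``if'' direction with no hypothesis on $L$. For the converse I would show that, when $p$ is a fibration, $c$ lies in $W\cap W^{\perp}$ and is therefore an equivalence, the lifting problem $c\perp c$ producing a two-sided inverse. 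That $c\in W$ is where right-properness enters: $q$ is the base change of $\eta_Y\in W$ along the \emph{fibration} $Lp$, so $q\in W$, and two-out-of-three applied to $\eta_X=q\circ c$ gives $c\in W$. That $c\in W^{\perp}$ is the right-cancellation property of an orthogonal class: $\pi$ and $p=\pi\circ c$ are both fibrations, and a lifting problem for $c$ against an $L$-equivalence $f$ is solved by pushing it forward along $\pi$ to one for $p$, existence coming from $p\perp f$ and compatibility with $c$ from uniqueness in $\pi\perp f$.

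The implication $(1)\Rightarrow(2)$ is immediate, since local Cartesianness is stability of $W$ under \emph{all} base change whereas right-properness asks only for stability along fibrations. For $(3)\Rightarrow(1)$ I would exploit that $C$ is an $\infty$-topos: every base-change functor $g^{*}\colon C_{/Y}\to C_{/Y'}$ is a left adjoint, hence preserves colimits, pushouts and equivalences, and so carries the strongly saturated class generated by a set $S$ into the one generated by $g^{*}S$. Taking $S$ to be a base-change-stable set of generators makes $g^{*}S\subseteq W$, and accessibility of $L$ identifies $W$ with the strongly saturated class on $S$; hence $g^{*}W\subseteq W$ for every $g$. The converse $(1)\Rightarrow(3)$ runs this backwards: when $W$ is base-change stable the base-change closure of any generating set still consists of $L$-equivalences and still generates, and the exponentiability hypothesis on arrows of $D$ is precisely what keeps this closure essentially small, so that it remains a \emph{set}.

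The main obstacle is $(2)\Rightarrow(1)$. Right-properness supplies stability of $W$ only along fibrations; factoring an arbitrary $g$ as an $L$-equivalence followed by a fibration reduces the claim, via the fibration part together with right-properness, to the assertion that base change along a \emph{trivial cofibration}---equivalently, along an arbitrary $L$-equivalence---preserves $L$-equivalences, i.e.\ that every morphism is ``sharp''. This statement naively reduces to itself, and breaking that circularity is the heart of the matter. I expect to resolve it by descent in the $\infty$-topos: universality of colimits lets one test membership in $W$ after pulling back along an effective-epimorphism cover and along its \v{C}ech nerve, where the colimit-stability of $W$ (colimit-preservation of $L$) converts levelwise equivalences into a global one, and accessibility permits an induction over a generating set. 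Assembling these should promote stability along fibrations to stability along all maps, and this is the step I would treat with the most care.
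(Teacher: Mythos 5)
Your first paragraph (the fibration criterion) is essentially the paper's own argument and is fine: the comparison map $c\colon X\to Y\times_{LY}LX$ is a weak equivalence by right properness plus 2-out-of-3, and is invertible because fibrations are right \emph{orthogonal} to $W$ (Prop.~\ref{bousfield/fibration/orthogonality}) and orthogonal classes enjoy right cancellation (Cor.~\ref{bousfield/fibration/weak-equivalence-implies-iso}). The equivalences, however, rest on a false identification: being locally Cartesian is \emph{not} ``stability of $W$ under all base change.'' Condition \ref{locally-cartesian/criterion/cartesian-over-local} only demands that pullbacks of unit maps along morphisms $S\to LX$ \emph{of $D$} be $L$-equivalences; stability of $W$ under arbitrary base change is (for an accessible localisation of an $\infty$-topos) equivalent to \emph{left exactness}, cf.\ \cite[\S 6.2.1]{HTT}, which all the paper's main examples fail. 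Concretely, for the plus construction: if $A$ is a nontrivial acyclic group, then $*\to BA$ is a plus-equivalence (2-out-of-3 against $BA\to *$), but its pullback along itself is the discrete space $A\to *$, which is not; yet the plus construction is locally Cartesian and is generated by the base-change-stable class of projections $A'\times Y\to Y$ with $A'$ acyclic. This one example breaks three of your steps. Your $(1)\Rightarrow(2)$ is ``immediate'' only from the wrong definition. Your $(2)\Rightarrow(1)$, as you formulate it, asks you to prove that right properness forces $W$ to be stable under \emph{all} base change --- a false statement, so no amount of descent can close that gap; in the paper this implication is a one-liner: pull the fibration $S\to LX$ (a morphism of $D$) back along the weak equivalence $X\to LX$ and apply right properness. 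And your $(3)\Rightarrow(1)$ proves the same false statement; the broken step is the claim that $g^*$ carries the strongly saturated class generated by $S$ into the one generated by $g^*S$. Strong saturation does not commute with slicing: the 2-out-of-3 closure passes through maps that do not live over the base (above, $BA\to *$ admits no structure of map over $BA$), so $W\cap C\slice Y$ is in general strictly larger than the saturation, computed in $C\slice Y$, of the generators lying over $Y$.

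What the paper does instead of your $(3)\Rightarrow(1)$ is the key idea you are missing (Prop.~\ref{locally-cartesian/generator}): it never attempts pullback-stability of $W$, but introduces the class of \emph{$L$-acyclic} maps --- those whose base changes along morphisms \emph{of $D$} are $L$-equivalences --- and proves that \emph{this} class is strongly saturated, hence equals $W$. The restriction to local targets is exactly what rescues the 2-out-of-3 step: given $g\colon Y\to Z$ in $W$ and a test map $Y\to B$ with $B\in D$, locality of $B$ lets one extend the test map along $g$, so all the relevant pullbacks stay in the picture; exponentiability of arrows of $D$ (not the topos property per se) is what makes these pullbacks preserve the colimits appearing in the saturation. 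Dually, for $(1)\Rightarrow(3)$ the paper does not take ``any generating set'' --- base changes of arbitrary $L$-equivalences are \emph{not} $L$-equivalences here, by the same counterexample --- but the specific set of units $e_X\colon X\to RLX$, $X\in C^\kappa$, which generates by Lemma~\ref{bousfield/generation} and whose base-change stability is controlled by criterion \ref{locally-cartesian/criterion/cartesian-unit} (Prop.~\ref{locally-cartesian/unit-generates}).
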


The condition to be a fibration for a proper left localisation model structure is very restrictive. 
For example, it is easy to see that `fibres of fibrations are local,' in the sense that if $f:X\rightarrow Y$ is a fibration and $S$ is a local object mapping to $Y$, then $S\times_YX\cong S\times_{Y^+}X^+$. 
In particular, this fibre product is local.
On the other hand, this restrictiveness allows us to formulate a kind of `classification' result for fibrations:

\begin{theorem}[Thm.~\ref{koszul/classification}] \label{main/koszul}

  Let $L:C\rightarrow D$ be a locally Cartesian localisation, and suppose that either $C$ or $D$ is an $\infty$-topos.
  Then for any connected, pointed object $(X,x)$ of $D$ we have a Koszul duality equivalence
  \[
    \mathrm{Fib}\slice X \cong \Mod_{\Omega_xLX}(D)
  \]
  that intertwines the fibre functor $x^*:\Fib\slice X\rightarrow C$ with the forgetful functor $\Mod_{\Omega_xLX}(D)\rightarrow D\subseteq C$.
  
\end{theorem}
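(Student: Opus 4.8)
The plan is to reduce the statement to the classical delooping/descent equivalence in an $\infty$-topos, and then to control the passage to local objects using right properness together with the fibre criterion of Prop.~\ref{locally-cartesian/criterion}. First I would identify $\Fib\slice X$ with $D\slice X$. Since $X\in D$ we have $LX\cong X$, so the fibre criterion says that a map $E\to X$ is an $L$-fibration exactly when the unit square exhibits $E\cong X\times_{LX}LE\cong LE$, i.e.\ exactly when $E$ is local; equivalently, by Thm.~\ref{bousfield/model/exists}, $\Fib\slice X$ is the full subcategory of $C\slice X$ on the maps lying in $D$, which is $D\slice X$. Likewise $\Omega_xLX\cong\Omega_xX=*\times_X*$ is computed identically in $C$ and $D$ (it is a fibre of the fibration $x$, hence local), and for any fibration $E\to X$ the fibre $x^*E$ is local by ``fibres of fibrations are local,'' so $x^*$ already takes values in $D$ and will match the forgetful functor. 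The theorem thus reduces to an equivalence $D\slice X\cong\Mod_{\Omega_xX}(D)$ intertwining $x^*$ with the forgetful functor, and I would split according to which of $C$, $D$ is an $\infty$-topos. If $D$ is an $\infty$-topos, this is precisely the standard fact that over a connected pointed object the effective epimorphism $x:*\to X$ has \v{C}ech nerve the bar construction $(\Omega_xX)^{\times\bullet}$, so that descent yields $D\slice X\cong\lim_{\Delta}D\slice(\Omega_xX)^{\times\bullet}\cong\Mod_{\Omega_xX}(D)$ with $x^*$ the monodromy/fibre functor; here I would simply cite this.

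If instead $C$ is an $\infty$-topos, I would run the same descent equivalence in $C$ to obtain $C\slice X\cong\Mod_{\Omega_xX}(C)$, intertwining the fibre functor with the forgetful functor, and then show that it restricts to local objects on both sides. The crux is the lemma that a map $E\to X$ is a fibration (equivalently, $E$ is local) if and only if its fibre $x^*E$ is local. The forward direction is again ``fibres of fibrations are local.'' For the converse, suppose $x^*E$ is local. The unit $u:E\to LE$ is an $L$-equivalence over $X$; pulling it back along the map $x^*LE\to LE$—a fibration, since $x:*\to X$ is one and fibrations are stable under base change—gives, by right properness, that $x^*E\to x^*LE$ is an $L$-equivalence. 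Both sides are local ($x^*LE$ as a fibre of the fibration $LE\to X$, and $x^*E$ by hypothesis), so this map is an equivalence. Thus $u$ is an equivalence on the fibre over the effective epimorphism $x$, hence on every term $(\Omega_xX)^{\times\bullet}\times x^*E$ of the \v{C}ech nerve, and descent in $C$ upgrades this levelwise equivalence to $E\cong LE$.

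It remains, in this second case, to check that the equivalence respects the module structures, i.e.\ that the full subcategory of $\Mod_{\Omega_xX}(C)$ on modules with local underlying object really is $\Mod_{\Omega_xX}(D)$. For this I would use that right properness gives $L(S\times M)\cong S\times LM$ for $S$ local (pull the unit $M\to LM$ back along the fibration $S\times LM\to LM$), which makes the levelwise application of $L$ a reflective localisation of $\Mod_{\Omega_xX}(C)$ onto modules in $D$; since $\Omega_xX$ and all products by it are computed the same in $C$ and $D$, this target is exactly $\Mod_{\Omega_xX}(D)$. Under $C\slice X\cong\Mod_{\Omega_xX}(C)$ the key lemma matches these local objects with $D\slice X$, so the equivalence restricts to $D\slice X\cong\Mod_{\Omega_xX}(D)$, still intertwining $x^*$ with the forgetful functor.

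I expect the converse half of the key lemma—that a fibration with local fibre over a connected local base has local total space—to be the main obstacle: it is the assertion that a locally Cartesian localisation is closed under ``dependent sums,'' i.e.\ behaves like a $\Sigma$-closed modality, and it is the only place where right properness and descent in the ambient $\infty$-topos are both genuinely used. A secondary point to handle carefully is that ``connected'' must be interpreted in the ambient $\infty$-topos, so that $x:*\to X$ is an effective epimorphism there and the \v{C}ech descent above is available.
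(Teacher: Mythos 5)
Your proposal is correct and takes essentially the same route as the paper: reduce $\Fib\slice X$ to $D\slice X$ via the fibre criterion, invoke the bar/descent (Koszul duality) equivalence in whichever of $C$, $D$ is the $\infty$-topos, and, when only $C$ is a topos, restrict that equivalence along the lemma that a map over $X$ has local total space iff its fibre is local. Your right-properness-plus-conservativity argument for that lemma is exactly the paper's Proposition \ref{koszul/thick} (proved there via the five lemma, i.e.\ conservativity of pullback along the effective epimorphism $x$), and the connectedness caveat you flag is precisely how the paper splits the hypotheses in Theorem \ref{koszul/classification}.
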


\paragraph{Nullification}

As well as the already rich family of examples coming from $\infty$-topos theory, many important examples of locally Cartesian localisations can be constructed as \emph{nullifications} (Definition \ref{nullification}):

\begin{corollary}[of Thm.~\ref{main/koszul}]

  Let $L:C\rightarrow D$ be a nullification functor, and suppose that every morphism in $D$ is exponentiable in $C$ (for example, if $C$ is an $\infty$-topos).
  Then $L$ is locally Cartesian.
  
\end{corollary}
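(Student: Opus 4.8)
The plan is to verify condition~(3) of Proposition~\ref{locally-cartesian/criterion}: since every morphism of $D$ is exponentiable in $C$, it suffices to exhibit a \emph{set} of morphisms, stable under base change, that generates $L$ as a Bousfield localisation. By definition (Definition~\ref{nullification}) the nullification $L$ is the localisation at a set $\Sigma_0 = \{A_i \to \point\}_i$ of maps to the terminal object, so the whole problem is to replace $\Sigma_0$ by a base-change-stable set with the same saturation.

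First I would record the reduction. Writing $W$ for the strongly saturated class of $L$-equivalences, consider the class $\overline W$ of maps all of whose base changes (along exponentiable maps) lie in $W$. Because an $\infty$-topos is locally Cartesian closed, each pullback functor $g^*$ is a left adjoint and hence preserves colimits, pushouts, $2$-out-of-$3$ and retracts; consequently $\overline W$ is again strongly saturated. Therefore $W \subseteq \overline W$ as soon as $\Sigma_0 \subseteq \overline W$, i.e.\ as soon as every base change of a generator is an $L$-equivalence. Since the generators $A_i \to \point$ have terminal codomain, their base changes are precisely the projections $A_i \times Y \to Y$; and as $A_i \times (-)$ preserves colimits and $W$ is closed under colimits, it is enough to treat $Y = G$ ranging over a set of generators of $C$.

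The crux is thus the fibrewise statement: \emph{each projection $A \times Y \to Y$ is an $L$-equivalence.} Equivalently, for every $L$-local (that is, $A$-null) object $Z$ the map $\Map(Y,Z) \to \Map(A \times Y, Z)$ is an equivalence. By Cartesian closedness the target is $\Map(Y, Z^A)$ and the map is induced by the unit $Z \to Z^A$ of exponentiation along $A \to \point$, so what must be shown is that an $A$-null object is \emph{internally} null, $Z \xrightarrow{\sim} Z^A$. This is the main obstacle. In $\Space$ it is immediate, the internal hom being the mapping space itself; in a general $\infty$-topos external nullity of $Z$ must be promoted to internal nullity, which I would do by descent --- checking the equivalence $Z \to Z^A$ locally, slice by slice, where the internal hom is computed fibrewise and there reduces to external nullity of $Z$ over each object.

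Granting the crux, the set $\Sigma = \{A_i \times G \to G\}$, with $G$ running over generators of $C$, is a set; it contains $\Sigma_0$ (take $G = \point$) and each of its members is an $L$-equivalence, so it has the same saturation as $\Sigma_0$; and by the reduction of the second paragraph it is stable under base change. Condition~(3) of Proposition~\ref{locally-cartesian/criterion} is therefore met, so $L$ is locally Cartesian. For the stated generalisation --- $C$ not a topos but every arrow of $D$ exponentiable in $C$ --- the same argument applies once the pullback functors and internal homs used above are replaced by the exponentials guaranteed by the hypothesis.
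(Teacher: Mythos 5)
Your overall skeleton --- pass to the class of maps all of whose base changes are $L$-equivalences, show it is strongly saturated, and thereby reduce to a base-change-stable generating set --- is precisely the paper's own mechanism (Proposition~\ref{locally-cartesian/generator}, Lemma~\ref{locally-cartesian/acyclic/saturated}, Corollary~\ref{locally-cartesian/generator/strong}). But your proof hinges on a step that is both unnecessary and false, and the root cause is a misreading of Definition~\ref{nullification}. The paper does \emph{not} define the nullification of $A$ as the localisation at $\Sigma_0=\{A_i\rightarrow\point\}$; it defines it as the Bousfield localisation generated by \emph{all} product projections $\pi_Y:X\times Y\rightarrow Y$ with $X\in A$ and $Y$ an arbitrary object of $C$. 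That generating class is stable under base change on the nose, since the pullback of $X\times Y\rightarrow Y$ along any $Y'\rightarrow Y$ is $X\times Y'\rightarrow Y'$. Hence the corollary follows in one line from Corollary~\ref{locally-cartesian/generator/strong} (the exponentiability hypothesis being exactly what Proposition~\ref{locally-cartesian/generator} requires); there is no gap between ``external'' and ``internal'' nullity to bridge. (A side remark: the criterion you cite as ``condition (3) of Proposition~\ref{locally-cartesian/criterion}'' is actually condition (3) of the theorem in the introduction; in Proposition~\ref{locally-cartesian/criterion} itself, condition (3) concerns preservation of pullbacks of fibrations.)

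The step you identify as the crux --- that an externally $A$-null object $Z$, meaning $\Map(\point,Z)\simeq\Map(A,Z)$, is internally null, $Z\simeq Z^A$ --- is false in a general $\infty$-topos, so no descent argument can establish it. Take $C=\Fun(B\mathbb{Z},\Space)$, let $A=S^1$ with trivial action, and let $Z=S^1$ with the generator of $\mathbb{Z}$ acting by complex conjugation. Since $A$ and $\point$ are pulled back from $\Space$, one has $\Map_C(A,Z)\simeq\Map(S^1,Z^{h\mathbb{Z}})$ and $\Map_C(\point,Z)\simeq Z^{h\mathbb{Z}}$, so external nullity of $Z$ just says that $Z^{h\mathbb{Z}}$ is $S^1$-null in $\Space$, i.e.\ homotopy discrete. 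A computation with the twisted free loop space (paths from $x$ to $\bar x$, a covering of $S^1$ with fibre $\pi_1(S^1)\cong\mathbb{Z}$ and monodromy a shift by $2$) shows $Z^{h\mathbb{Z}}\simeq\point\sqcup\point$, so $Z$ \emph{is} externally null. But the underlying space of $Z^A$ is the free loop space $\Map(S^1,S^1)\simeq\mathbb{Z}\times S^1$, so $Z\rightarrow Z^A$ is not an equivalence. Your descent sketch is circular at exactly this point: verifying $Z\simeq Z^A$ slice by slice amounts to verifying $\Map(U,Z)\simeq\Map(A\times U,Z)$ for every $U$, which \emph{is} internal nullity, whereas external nullity supplies only the case $U=\point$. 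The two notions happen to coincide in $\Space$ (where the plus construction lives) because there the internal hom is the mapping space, but this is a special feature, not a general fact, and the paper's definition is arranged precisely so as never to need it.
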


Nullifications can have highly nontrivial `homotopical content' in the sense that the localisation of truncated objects can have interesting higher homotopy groups. (Left exact localisations, on the other hand, preserve truncatedness.) 
For example:
\begin{itemize}

  \item 
    Groupoid completion $|-|:\Cat_\infty\rightarrow\Space$. Every space arises as the completion of a preorder (i.e.~a 0-truncated object).
    
  \item 
    Morel-Voevodsky $\mathbb{A}^1$-localisation. Localising $\mathbb{P}^\infty$ yields an object with $\pi_1(L_{\mathbb{A}^1}\mathbb{P}^\infty) \cong \mathbb{G}_m$ \cite[Prop.~4.3.8]{morel19991}.
    
  \item 
    The Quillen plus construction. Localising a 1-truncated object $\Z\times \mathrm{BGL}(R)$ yields algebraic $K$-theory.
  
\end{itemize}
One does not have to look far to find further examples, such as more general nullifications of spaces \cite{farjoun2006cellular} or extended theories of motives such as `motives with modulus' \cite{kahn2020motives}.

In the remainder of the paper, we study the class of fibrations in three settings, exploring the interpretation of the Koszul duality equivalence and constructing examples of fibrations from univalent families \cite{gepner2017univalence}.

\paragraph{Thomason model structure}
The first example is the $\infty$-categorical analogue of the Thomason model structure \cite{thomason1980cat, mazelgee2015grothendieck}, which was the original motivation for this paper.
This is much easier to understand than the classical Thomason structure: groupoid completion of $\infty$-categories is a nullification (of $\Delta^1$), and hence is presented by a proper left localisation model structure on $\Cat_\infty$ .
The fibrations are precisely the \emph{local systems}, that is functors which are (categorically equivalent to) both left and right fibrations. (Meanwhile, the fibrations and cofibrations in the classical Thomason model structure are rather hard to describe explicitly \cite{bruckner2016cofibrant}.)

\begin{theorem}[\S\ref{thomason/}]

  The localisation $|-|:\Cat_\infty\rightarrow\Space$ is presented by a proper left localisation `Thomason' model structure on $\Cat_\infty$ in which a functor is a fibration if and only if it is classified by a local system.
  
  The category of Thomason fibrations over a connected $\infty$-category $C$ is equivalent to $\Mod_{\Omega_c|C|}$ for any $c:C$.

\end{theorem}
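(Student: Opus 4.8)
The plan is to realise the entire statement as an instance of the general machinery developed above, with the only genuinely new input being the dictionary between the abstract fibration criterion and the classical bi-fibration description of local systems. First I would check that $|-|$ is the nullification of $\Delta^1$: an $\infty$-category $X$ is $\Delta^1$-local exactly when the degeneracy $X\to\Fun(\Delta^1,X)$ is an equivalence, i.e.\ when every arrow of $X$ is invertible, which is precisely the condition that $X$ be an $\infty$-groupoid. Thus the local objects are the spaces and the localisation is groupoid completion. To apply the corollary that nullifications are locally Cartesian I must verify that every morphism of $\Space$ is exponentiable in $\Cat_\infty$; this holds because a functor between $\infty$-groupoids is (equivalent to) a left, indeed two-sided, fibration, and left fibrations are flat, hence exponentiable, in $\Cat_\infty$. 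Consequently $|-|$ is locally Cartesian, so Theorem~\ref{bousfield/model/exists} together with Proposition~\ref{locally-cartesian/criterion} furnishes the left-and-right-proper ``Thomason'' model structure, in which $f:X\to Y$ is a fibration if and only if the square relating $X,Y$ to $|X|,|Y|$ is a pullback.

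Next I would match this criterion with the condition of being a local system. If the square is a pullback, then $f$ is the base change along $Y\to|Y|$ of the map of spaces $|X|\to|Y|$; since maps of spaces are both left and right fibrations and these classes are stable under base change, $f$ is a local system. Conversely, a local system over $Y$ is a left fibration whose classifying functor $Y\to\Space$ inverts every morphism, so by the universal property of $|Y|$ as the localisation of $Y$ at all its morphisms it factors through a functor $|Y|\to\Space$; unstraightening identifies $f$ with the base change along $Y\to|Y|$ of a map of spaces $E\to|Y|$. As every morphism of $D=\Space$ is a fibration (Theorem~\ref{bousfield/model/exists}) and fibrations are closed under base change, $f$ is a fibration. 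This establishes the fibration characterisation.

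For the second statement, let $C$ be connected with a chosen point $c$, so $|C|\in\Space$ is connected and pointed by the image of $c$. The fibration criterion exhibits $\Fib\slice C$ as the base changes along $C\to|C|$ of maps of spaces over $|C|$, i.e.\ as $\Fib\slice|C|$; concretely $X\mapsto|X|$ and $E\mapsto C\times_{|C|}E$ are mutually inverse, both sides being identified with $\Fun(|C|,\Space)$ under straightening. Since $|C|$ is already local we have $L|C|=|C|$ and $\Omega_c L|C|=\Omega_c|C|$, so applying the Koszul duality equivalence of Theorem~\ref{main/koszul} to the connected pointed object $|C|$ of $\Space$ gives $\Fib\slice|C|\cong\Mod_{\Omega_c|C|}(\Space)$. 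Composing yields $\Fib\slice C\cong\Mod_{\Omega_c|C|}$, as claimed.

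I expect the main obstacle to be the fibration characterisation rather than the application of Koszul duality: reconciling the abstract pullback criterion with the concrete bi-fibration condition rests on the straightening/unstraightening equivalence and, crucially, on the universal property of $|Y|$ as the localisation of $Y$ at all its morphisms, which is exactly what forces a two-sided fibration over $Y$ to descend to a map of spaces over $|Y|$. A secondary technical point, easy to overlook, is the exponentiability of maps of spaces in $\Cat_\infty$, which is what licenses the passage through the nullification corollary.
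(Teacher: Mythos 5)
Your proposal is correct and follows essentially the same route as the paper: identify $|-|$ as the nullification of $\Delta^1$, use exponentiability of maps of spaces to get local Cartesianness via Corollary \ref{locally-cartesian/generator/strong}, match the pullback-square criterion of Proposition \ref{locally-cartesian/criterion} against local systems by descending the classifying functor along $Y\rightarrow|Y|$ and using base-change compatibility of unstraightening, and then apply the Koszul duality theorem for the module classification. The only (harmless) deviation is in the converse of the fibration characterisation, where the paper verifies the pullback square directly by identifying the total space of the descended presheaf with $|X|$ via left Kan extension, whereas you bypass this by invoking the fact that any base change of a map between local objects is a fibration.
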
        

\paragraph{Motivic spaces}
In the case of the Morel-Voevodsky $\A^1$-localisation of Nisnevich sheaves, our theory of fibrations is an $\infty$-analogue of the notion of $\A^1$-\emph{cover} studied in \cite{asok2009a1} and used to compute $\A^1$-homotopy groups of toric varieties.
Koszul duality gives a higher `geometric' interpretation of modules over the $\A^1$-loop space analogous to \cite[Thm.~3.16]{asok2009a1}.
Via univalent families, we can construct some classes of explicit examples.

\begin{theorem}[\S\ref{motive/}]

  Morel-Voevodsky $\A^1$-localisation $L_{\A^1}:\Sheaf_\infty(\Sm_k,\mathrm{Nis})\rightarrow M_k$ is presented by a proper left localisation model structure on $\Sheaf_\infty(\Sm_k)$.
  
  The associated class of $\A^1$-fibrations is classified via Koszul duality:
  \[
    \A^1\Fib\slice X \cong \Mod_{\Omega_xL_{\A^1}X} \left(\Sheaf^{\A^1}(\Sm, \mathrm{Nis}) \right)
  \]
  for any $\A^1$-connected Nisnevich sheaf $X$.
  Moreover:
  \begin{enumerate}
  
    \item
      Any map admitting a lift to an $\A^1$-cover between Nisnevich-fibrant simplicial presheaves is an $\A^1$-fibration.
      The converse is true for maps betweeen Nisnevich sheaves of sets.
  
    \item
      If the $k$ is a number field and Lang's conjectures hold, then any proper, smooth family of hyperbolic varieties is an $\A^1$-fibration.
      
    \item
      An $A$-banded $n$-gerbe is an $\A^1$-fibration for any $n\geq 0$ and $A$ either $\G_m$ or finite \'etale with torsion prime to the characteristic of the ground field.
  
  \end{enumerate}
\end{theorem}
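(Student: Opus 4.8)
The plan is to derive every assertion from the three general results already established, supplying only the geometry special to motivic spaces. First I would record that $L_{\A^1}$ is the nullification (Definition~\ref{nullification}) of the representable sheaf $\A^1$ in the $\infty$-topos $\Sheaf_\infty(\Sm_k,\mathrm{Nis})$: a sheaf $Z$ is $\A^1$-local exactly when $\Map(U\times\A^1,Z)\simeq\Map(U,Z)$ for all $U$, which is the internal nullity of $Z$ against $\A^1\to\point$ in the locally Cartesian closed structure of the topos. By the Corollary of Theorem~\ref{main/koszul}, a nullification in an $\infty$-topos (where every morphism is exponentiable) is locally Cartesian; the equivalence (1)$\Leftrightarrow$(2) of Proposition~\ref{locally-cartesian/criterion} then upgrades the left localisation model structure of Theorem~\ref{bousfield/model/exists} to a right proper one, which is the first assertion. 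The Koszul classification is then nothing but Theorem~\ref{koszul/classification} applied to $C=\Sheaf_\infty(\Sm_k,\mathrm{Nis})$, $D=M_k$ and a pointed $\A^1$-connected sheaf $X$, since $\A^1$-connectedness says precisely that $L_{\A^1}X$ is connected in $D$.

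For the three families I would use the fibration criterion of Proposition~\ref{locally-cartesian/criterion}: $f:X\to Y$ is an $\A^1$-fibration exactly when the square with top $X\to L_{\A^1}X$ and bottom $Y\to L_{\A^1}Y$ is Cartesian. The structural observation that organises all the examples is the following: since every morphism between $\A^1$-local objects is an $\A^1$-fibration (by Theorem~\ref{bousfield/model/exists}, as the defining square is then the identity square, trivially Cartesian) and fibrations are stable under base change, any map obtained as the pullback of a univalent family $E\to B$ with both $E$ and $B$ already $\A^1$-local is automatically an $\A^1$-fibration. Each example is thereby reduced to exhibiting $f$ as such a pullback and checking the $\A^1$-locality of the classifying object.

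Example (3) is cleanest. An $A$-banded $n$-gerbe over $Y$ is the base change of the universal gerbe $\point\to B^{n+1}A$ (for nonabelian bands, of the corresponding automorphism gerbe built from $A$) along its classifying map $Y\to B^{n+1}A$, with fibre $B^nA$. For $A$ equal to $\G_m$ or a finite étale group scheme with torsion prime to $\mathrm{char}\,k$, the sheaf $A$ is strictly $\A^1$-invariant on $\Sm_k$, so its iterated deloopings are $\A^1$-local; hence the universal family lies in $D$ and its pullback $f$ is an $\A^1$-fibration. For (2) the same template is applied to the universal family $\mathcal{U}\to\mathcal{M}$ over a moduli stack of hyperbolic (canonically polarised) varieties: the given family is the pullback of $\mathcal{U}\to\mathcal{M}$ along its classifying map $Y\to\mathcal{M}$, and it suffices that both $\mathcal{U}$ and $\mathcal{M}$ be $\A^1$-local, i.e.\ $\A^1$-rigid. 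For (1) I would match the Asok–Morel notion of $\A^1$-cover with the Cartesian-square criterion: over Nisnevich-fibrant objects an $\A^1$-cover has $\A^1$-rigid (discrete, locally constant) fibres, so its localisation square is Cartesian and it is an $\A^1$-fibration; conversely, a fibration between $0$-truncated sheaves has $\A^1$-local $0$-truncated, hence $\A^1$-rigid, fibres, and the Cartesian square recovers the covering-space property.

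The main obstacle is (2): the reduction is formal once $\mathcal{M}$ is known to be $\A^1$-local, but establishing that $\A^1$-rigidity — equivalently that hyperbolic families do not deform nontrivially over $\A^1$ — is the deep conditional input, a hyperbolicity-of-moduli statement which over a number field follows from Lang's conjectures; some care is also needed to run the univalent-family argument with $\mathcal{M}$ a stack rather than a sheaf, so that the object classifier and the exponentiability hypothesis genuinely apply. Secondary technical points are the verification of \emph{strict} $\A^1$-invariance of the bands in (3) (so that all deloopings, not merely $A$ itself, remain local), and in (1) the model-categorical bookkeeping of ``admitting a lift to an $\A^1$-cover'' under fibrant replacement, which is precisely what forces the restriction to Nisnevich-fibrant domains and to sheaves of sets for the converse.
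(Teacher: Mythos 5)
Your treatment of the model structure, the Koszul classification, and examples (2) and (3) follows the paper's route essentially verbatim: $L_{\A^1}$ is a nullification (Definition \ref{nullification}), hence locally Cartesian by Corollary \ref{locally-cartesian/generator/strong}, right properness and the Koszul equivalence then come from Proposition \ref{locally-cartesian/criterion} and Theorem \ref{koszul/classification}, and the geometric examples are exhibited as base changes of universal families over $\A^1$-local classifying objects ($B^{n+1}A$ for strictly $\A^1$-invariant $A$ via HTT's gerbe classification; the moduli stack of hyperbolic varieties, whose $\A^1$-locality is the conditional input from Lang's conjectures via the cited literature). That part is fine.

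The genuine gap is in part (1). You argue: an $\A^1$-cover over Nisnevich-fibrant objects has ``$\A^1$-rigid (discrete, locally constant) fibres, so its localisation square is Cartesian.'' This implication is false as a matter of general principle: having $L$-local fibres is \emph{necessary} but nowhere near \emph{sufficient} for the localisation square to be Cartesian. A minimal counterexample already appears in the Thomason setting of the same paper: the right fibration $\{1\}\hookrightarrow\Delta^1$ has fibres $\emptyset$ and $\point$, both local, yet $|\{1\}|\rightarrow|\Delta^1|$ is $\point\rightarrow\point$ and the square is not a pullback, so it is not a fibration (Corollary \ref{thomason/fibration}: fibrations must be local systems). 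The same failure occurs motivically. (As a side point, your parenthetical ``discrete, locally constant'' is also too strong: $\G_m$-torsors are $\A^1$-covers in the sense of Asok--Morel, and their fibres are $\A^1$-rigid but not discrete.) The actual difficulty in (1), which your proposal never engages, is a \emph{translation} problem: ``$\A^1$-cover'' is a strict unique-right-lifting property against $\A^1$-trivial cofibrations in the simplicial model category of presheaves, while ``$\A^1$-fibration'' is a lifting property (contractibility of a space of lifts) in the $\infty$-category of Nisnevich sheaves. The paper bridges these with two lemmas (Lemmas \ref{simplicial-model/lifting-problem/model} and \ref{simplicial-model/lifting-problem/solution}): every $\infty$-categorical lifting problem against an $L_{\A^1}$-equivalence can be strictified to one with bifibrant objects, a monic left leg, and the given fibrant $f$, and the simplicial set of strict solutions (which the cover property makes a point, also after tensoring with $\Delta^n$) computes the $\infty$-groupoid of lifts. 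The converse for sheaves of sets likewise uses that the simplicial mapping object of lifts is simplicially constant and contractible, hence a point — not merely that the fibres are rigid. Without some version of this strictification argument, neither direction of (1) is established.
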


\paragraph{Plus construction}
A special feature of the plus construction among nullification functors is that it can be built by coning off a single acyclic space.
Via descent theory and Koszul duality, this corresponds to trivialising the action of the loop space over the universal acyclic extension thereof. 

Studying pullbacks preserved by the plus construction invites a comparison with excision in algebraic $K$-theory; unfortunately, we find the domain of overlap with our theory of fibrations is trivial.

\begin{theorem}[\S\ref{plus/}]

  The Quillen plus construction $(-)^+:\Space\rightarrow\Space_{solv}$, where $\Space_{solv}\subseteq\Space$ is the fixed set of the plus construction comprising the spaces with pro-solvable fundamental group, is presented by a proper left localisation model structure.
  
  The category of plus-fibrations over a connected pointed space $(X,x)$ is equivalent to the category of $\Omega_xX$-modules equipped with a trivialisation over the universal acyclic extension of $\Omega_xX$.
  Moreover, we have the following examples and non-examples of plus-fibrations:
  \begin{enumerate}
    \item
      Any map that can be expressed as a composite of spherical fibrations is a plus-fibration.
      In particular, an acyclic $\infty$-group cannot act nontrivially on a sphere.
      
    \item
      The universal $n$-torus fibration for $n>1$ is not a plus-fibration.
      
    \item
      A surjective algebra homomorphism $\phi:A\rightarrow A/I$ induces a plus-fibration $B\GL(\phi):B\GL(A)\rightarrow B\GL(A/I)$ if and only if $I=0$.
  \end{enumerate}

\end{theorem}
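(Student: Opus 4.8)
The plan is to obtain the two structural assertions from the general theory developed above, and then to treat the three examples by reducing each to the fibre/pullback criterion of Proposition~\ref{locally-cartesian/criterion}.

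First I would record that the plus construction is a nullification: by Berrick--Casacuberta there is a universal acyclic (discrete) group $U$ with $(-)^+=P_{BU}$, nullification at the single acyclic space $BU$ referred to above. Since $\Space$ is an $\infty$-topos, the Corollary to Theorem~\ref{main/koszul} shows $(-)^+$ is locally Cartesian, and Proposition~\ref{locally-cartesian/criterion} then upgrades the left localisation model structure --- which exists and is left proper by Theorem~\ref{bousfield/model/exists} --- to a proper one, with $\Space_{solv}$ its category of fibrant objects. Feeding $C=\Space$, $L=(-)^+$ and a connected pointed $(X,x)$ into Theorem~\ref{main/koszul} then yields $\Fib\slice X\cong\Mod_{\Omega_x X^+}(\Space_{solv})$, intertwining the fibre functor with the forgetful functor.

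The remaining structural work --- and the step I expect to be the main obstacle --- is to rewrite $\Mod_{\Omega_x X^+}(\Space_{solv})$ as the category of $\Omega_x X$-modules carrying a trivialisation over the universal acyclic extension. I would proceed by descent along the acyclic map $X\to X^+$: restriction along $\Omega_x X\to\Omega_x X^+$ turns a plus-local module into an $\Omega_x X$-module, and the obstruction to this being an equivalence is precisely the descent datum supplied by the universal acyclic cover classifying $X\to X^+$, i.e.~a trivialisation of the induced action after base change to the universal acyclic extension $\widetilde{\Omega_x X}$. The difficulty is to show this descent is effective and that plus-local (``pro-solvable'') fibrancy is equivalent to the bare existence of such a trivialisation, with no residual condition; here the single-generator presentation $(-)^+=P_{BU}$ is exactly what makes the acyclic extension universal and the descent tractable.

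For the examples I would use that the fibre of a fibration over a local object is local (as noted after the second theorem) as the easy necessary condition, and the pullback criterion for sufficiency. \textbf{(a)} A sphere $S^n$ is plus-local, so the universal spherical fibration, realised as a univalent family, satisfies the pullback criterion and is a plus-fibration; composites of fibrations are fibrations. For the corollary, an action of an acyclic $\infty$-group $A$ on $S^n$ produces a spherical fibration $S^n\to S^n_{hA}\to BA$; were it a plus-fibration, the pullback criterion together with $(BA)^+\simeq\point$ would force the Borel construction to split off $BA$, hence the action to be trivial. \textbf{(c)} The fibre of $B\GL(\phi)$ is built from the discrete relative group $\GL(A,I)$ and so is concentrated in low degrees, whereas its plus construction acquires higher homotopy governed by relative $K$-theory $K_*(A,I)$ (equivalently, the perfect radical detected by the relative elementary subgroup); these disagree for $I\neq0$, so the fibre is not plus-local and $B\GL(\phi)$ is a fibration only when $I=0$, where $\phi$ is an equivalence --- the promised triviality of the overlap with excision. \textbf{(b)} Here the fibre $T^n$ \emph{is} plus-local, so the easy necessary condition is satisfied; the point is that the full pullback square nonetheless fails for $n>1$, a genuinely global obstruction that I would detect either by computing $(-)^+$ on the total space of the universal family over $B\Aut(T^n)$ or, equivalently, by exhibiting the Koszul-dual module as one admitting no trivialisation over the universal acyclic extension.
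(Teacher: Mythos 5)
Your structural reductions (nullification $\Rightarrow$ locally Cartesian $\Rightarrow$ proper left localisation model structure, then Theorem~\ref{main/koszul} giving $\Fib\slice X\cong\Mod_{\Omega_xX^+}(\Space_{solv})$) match the paper. But the step you yourself flag as ``the main obstacle'' is exactly what the paper supplies and you do not. The paper does not perform \v{C}ech-style descent along the map $X\to X^+$; it invokes Hausmann's observation that there is a \emph{cofibre} sequence $\mathrm{CW}_{\mathcal{A}}(X)\to X\to X^+$ (coning off the acyclic cellularisation yields the plus construction) and then applies the Rezk--Lurie descent theorem to this pushout, obtaining $\Space\slice X^+\cong\Space\slice X\times_{\Space\slice \mathrm{CW}_{\mathcal{A}}(X)}\Space$, after which Koszul duality on each slice gives the trivialisation description, with the universal acyclic extension identified as $\Omega_x\mathrm{CW}_{\mathcal{A}}(X)$. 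Your worry about a ``residual condition'' is in fact well founded --- a module with a trivialisation whose underlying space is not plus-local (e.g.\ $BG$ for $G$ perfect, with trivial action and the canonical trivialisation) glues to a non-local object over $X^+$, so the modules must be taken in $\Space_{solv}$ --- but you neither resolve this nor supply the cofibre-sequence-plus-descent mechanism that makes the statement provable in the first place.

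The examples contain genuine errors, not just omissions. For (a), the inference ``$S^n$ is plus-local, so the universal spherical fibration satisfies the pullback criterion'' is invalid, and your own item (b) refutes it: $T^n$ is also plus-local, yet its universal fibration is not a plus-fibration. What the paper actually uses is that $\pi_0\Aut(S^n)=\Z/2$ is solvable, so that $B\Aut(S^n)$ and the Borel quotient $S^n/\Aut(S^n)$ are plus-local, and then the univalent-families proposition applies. For (b) you give no argument at all (``I would detect either by computing \dots or by exhibiting \dots''); the paper's argument is that for any plus-fibration the perfect radical of $\pi_1$ of the base must act trivially on the homotopy of the fibre, whereas $\mathrm{SL}_n(\Z)$, which is perfect for $n\geq 3$, acts nontrivially on $\pi_1(T^n)=\Z^n$. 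For (c), your claim that the fibre is non-plus-local for every $I\neq 0$ because of ``the perfect radical detected by the relative elementary subgroup'' presupposes that the relative elementary subgroup is perfect; this holds only when $I^2=I$ (it fails, for instance, for $(p)\subset\Z$, where the relevant congruence subgroups have nontrivial abelianisation), so your argument assumes what must be proved. The paper's argument runs in the opposite direction: \emph{if} $B\GL(\phi)$ were a plus-fibration, its fibre would compute relative $K$-theory, so $I$ would satisfy excision in $K_1$; Suslin's theorem (via $\mathrm{Tor}_1^{\Z\ltimes I}(\Z,\Z)=I/I^2$) then forces $I^2=I$, and only with that in hand does the commutator identity $E_{ij}(bc)=[E_{ik}(b),E_{kj}(c)]$ make the elementary subgroup of $\GL(I)$ perfect, contradicting locality of the fibre. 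Omitting Suslin's theorem is not a shortcut; it is the linchpin of the whole example.
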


\subsection*{Discussion}

  From the perspective of classical model category theory, model structures presenting a colimit-preserving localisation would be considered utterly trivial.
  Yet in $\infty$-category theory, they already include nearly all the interesting examples I can think of.
  Undoubtedly, with a little extra effort, a much more general existence result for Bousfield localisations can be found along the lines of \cite[Thm.~4.7]{barwick2010left}, but I don't have any application in mind for this.
  %
  
  Since we are only considering localisation functors that preserve colimits, the utility of these model structures, if any, is in computing homotopy \emph{limits}.
  However, study of the examples shows that fibrations for left localisation model structures are not very abundant.
  In each case there are certainly more examples of maps whose pullbacks are either preserved by localisation or have an error term that is more or less `computable'.
  Perhaps in these cases, we would do better to look for a weaker structure such as that of an $\infty$-category of fibrant objects in the sense of \cite[Def.~7.4.12]{HCHA}. 
  Anyway, I leave the search for such structures for another day.
  
\subsection*{Previous work}

This work is a response to a remark made in \cite{mazelgee2015modela}.
The contents of \S\ref{locally-cartesian/} were greatly inspired by \cite[\S1.5, \S4.3, and \S6.1]{cisinski2006prefaisceaux}.
Some properties of locally Cartesian localisations were studied in \cite{gepner2017univalence}, \cite[\S3]{Hoyois_2017}.

\subsection*{Acknowledgement}

Thanks to Marc Hoyois for helping me to understand a key fact about fibre sequences in $\infty$-topoi, Ryomei Iwasa for introducing me to excision in K-theory, to Shane Kelly for helping me with the motives literature, and to Aaron Mazel-Gee for telling me about model $\infty$-categories in the first place.

\subsection*{Conventions}

We use the Joyal-Lurie theory of quasi-categories as our foundation. 
As in \cite{HCHA} and much of the classical category theory literature, the slice category of a category $C$ over (resp.~under) an object $X$ is denoted $C\slice X$ (resp.~$X\slice C$). 
The `sandwich' category, which appears as the space of solutions to a lifting problem, is written $X\slice C\slice Y\defeq X\slice C\times_CC\slice Y$.
From this point on we mostly drop the prefix `$\infty$-' and refer to (the objects modelled by) quasi-categories as just `categories.' 
Similarly, we drop the subscript from $\Cat_\infty$.
Categories of sets, spaces, and categories, presheaves, presentability, and `smallness' are all assessed against a universe which is fixed throughout.

\section{Localisation}

In this section we review the basics of localisations and establish a simple existence result for Bousfield localisations of model structures on $\infty$-categories.

\begin{para}[Bousfield localisation]
\label{bousfield}

  Recall \pathcite{cis/localization/bousfield/criterion} that a \emph{Bousfield localisation} of an $\infty$-category $C$ is a functor $L:C\rightarrow D$ which admits an accessible right adjoint $R$ satisfying the equivalent conditions:
  \begin{itemize}

    \item $R$ is fully faithful (and therefore embeds $D$ as a reflective subcategory of $C$);
    
    \item $L$ is a localisation of $C$;

    \item $L$ is a localisation at the set $W_L$ of morphisms that become invertible in $C$.

  \end{itemize}
  In this case, the elements of $W_L$ are called $L$-\emph{equivalences}.
  The essential image of $D$ under $R$ consists exactly of the $W$-\emph{local} objects.
  
  A Bousfield localisation $L:C\rightarrow D$ is said to be \emph{generated} by a set of morphisms $W\subseteq C$ if $D$ is the category of $W$-local objects of $C$.
  (Note that this does not imply that $W$ generates $L$ as an ordinary localisation.)
  
\end{para}

\begin{lemma}[Bousfield localisations are generated by counits] \label{bousfield/generation}

  Let $L:C\rightarrow D$ be a Bousfield localisation with right adjoint $R$.
  Let $W\subseteq C^{\Delta^1}$ be set of maps of the form $X\rightarrow RLX$.
  Then $L$ is generated as a localisation by $W$.
  
\end{lemma}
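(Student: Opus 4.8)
The plan is to prove the statement in its strong form by identifying the strongly saturated class $\bar W$ generated by $W$ with the full class $W_L$ of $L$-equivalences. Since $L$ is, by the third characterisation in \ref{bousfield}, the localisation at $W_L$, the equality $\bar W = W_L$ exhibits $L$ as the localisation at $W$, which is exactly what ``generated as a localisation by $W$'' demands. Throughout I write $\eta\colon \id_C \Rightarrow RL$ for the unit, so that $W = \{\eta_X : X \in C\}$, and $\epsilon\colon LR \Rightarrow \id_D$ for the counit.

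First I would dispatch the easy inclusion $\bar W \subseteq W_L$. Because $R$ is fully faithful, $\epsilon$ is a natural equivalence; the triangle identity $\epsilon_{LX}\circ L\eta_X \simeq \id_{LX}$ then forces $L\eta_X$ to be an equivalence, so each $\eta_X$ is an $L$-equivalence. As the class $W_L$ of $L$-equivalences is strongly saturated and contains $W$, it contains the saturated class $\bar W$ that $W$ generates.

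The substance is the reverse inclusion $W_L \subseteq \bar W$, and naturality of the unit does all the work. Given an $L$-equivalence $f\colon A \to B$, the naturality square for $\eta$ gives $\eta_B \circ f \simeq RL(f)\circ \eta_A$. Both $\eta_A,\eta_B$ lie in $W \subseteq \bar W$; and since $Lf$ is an equivalence, so is $RL(f)$, whence $RL(f)\in\bar W$ (every strongly saturated class contains the equivalences). Closure of $\bar W$ under composition yields $RL(f)\circ\eta_A \simeq \eta_B\circ f \in \bar W$, and then $2$-out-of-$3$ applied to the pair $\eta_B$ and $\eta_B\circ f$ gives $f\in\bar W$. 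Combining the two inclusions gives $\bar W = W_L$, hence the conclusion.

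The argument needs no presentability or size input beyond what is packaged into ``Bousfield localisation,'' since it never leaves the three distinguished edges of the naturality square. I expect the only (minor) obstacle to be presentational rather than mathematical: one must arrange the coherence data of the naturality square and the triangle identity so that the $2$-out-of-$3$ step is a literal instance of the strongly-saturated axioms, rather than a manipulation in the homotopy category; once the closure properties (stability under equivalences, composition, and $2$-out-of-$3$) are cited cleanly, there is nothing further to compute.
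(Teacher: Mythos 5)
Your proof is correct, but it takes a genuinely different route from the paper's. You work on the morphism side: writing $\bar W$ for the strongly saturated class generated by the unit maps, you get $\bar W\subseteq W_L$ from the triangle identity (unit maps are $L$-equivalences, and $W_L$ is strongly saturated because $L$ is a localisation at $W_L$), and $W_L\subseteq\bar W$ from the naturality square $\eta_B\circ f\simeq RL(f)\circ\eta_A$ together with closure of $\bar W$ under equivalences, composition and $2$-out-of-$3$; the lemma then follows from the fact, recorded in \ref{bousfield}, that $L$ is the localisation at $W_L$. The paper instead works on the dual, test-object side: invoking the principle that a reflective functor is a localisation at $W$ precisely when the $W$-local presheaves are those that descend to $D$ (i.e.\ lie in the image of the fully faithful $L^*$), it reduces the lemma to the one-line observation that $W$-locality of a presheaf $F$ says exactly that $F(e_X):F(RLX)\stackrel{\sim}{\to}F(X)$ for all $X$, i.e.\ that $F\cong L^*R^*F$. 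Your version is more elementary and self-contained, since the closure properties of strong saturation follow directly from the paper's definition and no presheaf-level recognition principle is needed; the paper's version is shorter once that principle (from Cisinski's book) is granted, and it makes visible the object-wise shadow of your computation, namely that a presheaf inverting the unit maps automatically inverts every $L$-equivalence. The coherence worry you flag at the end is harmless: membership of a morphism in a strongly saturated class depends only on its image in the homotopy category of the localisation, so the composition and $2$-out-of-$3$ steps may legitimately be carried out there.
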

\begin{proof}

  It is equivalent to show that $W$-local presheaves descend to $D$. 
  But $W$-locality of a presheaf $F$ says precisely that $F(e_X):F(RLX)\tilde\rightarrow F(X)$ for all $X:C$, that is, $F\cong L^*R^*F$. \qedhere
  
\end{proof}

\begin{definition}
 
  A set of arrows $K$ in a category $C$ is \emph{strongly saturated} if every morphism in $C$ which becomes invertible in $C[K^{-1}]$ is already in $K$.
  
\end{definition}

\begin{para}[Model structures]

  We freely use certain notions and basic properties of (weak) factorization systems and sets of morphisms satisfying lifting properties which are well-known from classical model category theory (and somewhat reviewed in \cite[\S1]{mazelgee2015modela}.
  
  To every set of morphisms $K\subseteq C^{\Delta^1}$ of a category $C$ one can associate a set of \emph{$K$-fibrations}, which have the right lifting property against elements of $K$, $K$-\emph{fibrant} objects (when $C$ has a final object), and $K$-\emph{cofibrations}, which are the maps having the left lifting property against $K$-fibrations.
  
  We write $K-\mathrm{cof}$ for the set of $K$-cofibrations, and $K-\mathrm{cell}$ for the set of (relative) $K$-cell complexes \cite[Def.~3.3]{mazelgee2015modela}, that is, the smallest set of arrows containing $K$ and stable under pushouts and transfinite compositions.
  We always have $K-\mathrm{cell}\subseteq K-\mathrm{cof}$.
  Conversely, if $K$ admits the small object argument \cite[\S10.5]{hirschhorn2009model}, every element of $K-\mathrm{cof}$ is a retract of an element of $K-\mathrm{cell}$.
  
\end{para}

\begin{definition}[Fibration] \label{bousfield/model/fibration}

  Let $L:C\rightarrow D$ be a Bousfield localisation.
  An $L$-\emph{fibration} is a map in $C$ that has the right lifting property against all $L$-quivalences.
  An object of $C$ is $L$-\emph{fibrant} if ($C$ has a terminal object and) its map to the terminal object is an $L$-fibration.
  
\end{definition}

\begin{example}

  Every map between $L$-local objects, and hence every base change thereof, is an $L$-fibration.
  
\end{example}
  
\begin{definition}[Left localisation model structure] \label{bousfield/model}

  Let $C$ be an $\infty$-category, $L:C\rightarrow D$ a Bousfield localisation.
  A model structure $(W,Cof,Fib)$ on $C$ is said to be the \emph{left localisation model structure} associated to $L$ if:
  \begin{itemize}
    \item
      $Cof = C^{\Delta^1}$;
    \item
      $W = W_L$.
  \end{itemize}
  The fibrations of the Bousfield localised model structure, if it exists, are then exactly the $L$-fibrations; the trivial fibrations are isomorphisms.
    
\end{definition}

We begin by reproving a well-known result about fibrant objects in Bousfield localized model structures (compare \cite[Prop.~3.4.1]{hirschhorn2009model}):

\begin{lemma}

  Let $C$ be cocomplete, $K$ a set of morphisms in $C$ that is stable under pushout and satisfies the 2-out-of-3 proprety.
  Then
  \[
    [f:X\rightarrow Y]\in K \quad \Rightarrow \quad [S^n\otimes_XY\rightarrow Y]\in K 
  \]
  for any $n:\N$, where $-\otimes_X-:\Space\times X\slice C\rightarrow X\slice C$ denotes the tensoring.
  
\end{lemma}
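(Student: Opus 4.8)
The plan is to induct on $n$, exploiting the fact that the tensoring $-\otimes_X Y\colon\Space\to X\slice C$ is a left adjoint --- namely to $\Map_{X\slice C}(Y,-)$ --- and hence preserves all colimits, together with the presentation of $S^n$ as an iterated suspension. Write $q_n\colon S^n\otimes_X Y\to Y$ for the map induced by the collapse $S^n\to\ast$, using the identification $\ast\otimes_X Y\cong Y$; this $q_n$ is precisely the map we must exhibit in $K$. Adopting the convention $S^{-1}\defeq\emptyset$ and recalling that the initial object of the undercategory $X\slice C$ is $X$ itself, we get $S^{-1}\otimes_X Y\cong X$ and $q_{-1}=f$, so the hypothesis $f\in K$ is exactly the base case $n=-1$.

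Two preliminary observations set up the induction. First, $\id_Y\in K$: the factorisation $f=\id_Y\circ f$ exhibits $\id_Y$ as the remaining edge of a composable pair whose other two edges are both $f\in K$, so 2-out-of-3 applies. Second, for each $n\geq 0$ the suspension pushout $S^n\cong\ast\cup_{S^{n-1}}\ast$ in $\Space$ is sent by $-\otimes_X Y$ to a pushout square
\[
  \begin{tikzcd}
    S^{n-1}\otimes_X Y \ar[r, "q_{n-1}"] \ar[d, "q_{n-1}"'] & Y \ar[d, "j_1"] \\
    Y \ar[r, "j_2"'] & S^n\otimes_X Y
  \end{tikzcd}
\]
in $X\slice C$. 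Since a pushout is a \emph{connected} colimit, it is created by the forgetful functor $\forget\colon X\slice C\to C$, so the underlying square is a pushout in $C$; this is what couples a construction carried out in the slice to the pushout-stability of $K$, which is a property of maps of $C$.

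The inductive step is then formal. Assuming $q_{n-1}\in K$, the edge $j_1\colon Y\to S^n\otimes_X Y$ is a cobase change of $q_{n-1}$, hence lies in $K$ by pushout-stability. The composite $\ast\to S^n\to\ast$ is the identity, so $q_n\circ j_1=\id_Y$; as both $j_1$ and $\id_Y$ lie in $K$, the 2-out-of-3 property delivers $q_n\in K$, closing the induction.

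I expect the only point requiring care to be the bookkeeping in the undercategory: the identifications $\ast\otimes_X Y\cong Y$ and $\emptyset\otimes_X Y\cong X$, and the verification that the colimits in play are connected, so that they may be computed after applying $\forget$, where the hypotheses on $K$ actually reside. Granting these, the statement is a purely formal consequence of colimit-preservation of the tensoring and the two closure properties of $K$.
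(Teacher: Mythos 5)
Your proof is correct and is essentially the paper's own argument: induction on $n$, writing $S^{n}\otimes_X Y$ as the pushout $Y\sqcup_{S^{n-1}\otimes_X Y}Y$, putting the coprojection $Y\to S^n\otimes_X Y$ in $K$ by pushout-stability, and concluding via 2-out-of-3 against the identity. The only difference is that you spell out the bookkeeping the paper leaves implicit (the base case via $S^{-1}=\emptyset$, the fact that $\id_Y\in K$, and that the forgetful functor $X\slice C\to C$ creates pushouts), which is fine but not a different route.
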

\begin{proof}

  By induction on $n$: if $X'\defeq S^n\otimes_XY\rightarrow Y$ is in $K$ then so is $Y\rightarrow Y\sqcup_{X'} Y$ by pushout, hence also $S^{n+1}\otimes_XY=Y\sqcup_{X'}Y\rightarrow Y$ by 2-out-of-3. \qedhere

\end{proof}

\begin{proposition} \label{bousfield/fibration/orthogonality}

  Let $K$ be as above. 
  Then every $K$-fibration is right orthogonal to $K$.
  
\end{proposition}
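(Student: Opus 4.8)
The plan is to fix an arbitrary $f:A\rightarrow B$ in $K$ and show that $p:X\rightarrow Y$ is right orthogonal to $f$, i.e.\ that the space of solutions to every lifting problem of $f$ against $p$ is contractible. Equivalently, I must show that the comparison map
\[
  \theta\colon \Map(B,X)\longrightarrow \Map(A,X)\times_{\Map(A,Y)}\Map(B,Y)
\]
is an equivalence of spaces. The hypothesis that $p$ is a $K$-fibration says precisely that $\theta$ is surjective on $\pi_0$ (every lifting problem admits a solution); the content of the proposition is to upgrade this to an equivalence.

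First I would invoke the standard characterisation of equivalences of spaces by iterated diagonals: a map $\theta$ is an equivalence if and only if $\theta$ and all of its iterated diagonals $\theta,\ \Delta\theta,\ \Delta^{(2)}\theta,\dots$ are surjective on $\pi_0$. (A map is an equivalence iff it is $\pi_0$-surjective and a monomorphism, and it is a monomorphism iff its diagonal is an equivalence; iterating reduces the whole of `equivalence' to a sequence of $\pi_0$-surjectivity conditions.) Thus it suffices to prove that each $\Delta^{(n)}\theta$ is surjective on $\pi_0$.

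The key step is to identify these iterated diagonals with the tensored maps of the preceding lemma. Writing $M$ for the target of $\theta$, the fibre product $\Map(B,X)\times_{M}\Map(B,X)$ is the space of maps out of the pushout $B\sqcup_A B=S^0\otimes_A B$ that are compatible with the data over $Y$ and under $A$, and $\Delta\theta$ is restriction along the fold $S^0\otimes_A B\rightarrow B$. Passing to a further diagonal replaces the source by $B\sqcup_{S^0\otimes_A B}B=S^1\otimes_A B$, and in general the recursion $\Delta^{(n+1)}\theta=\Delta(\Delta^{(n)}\theta)$ matches, on the source side, the suspension recursion $S^{n+1}\otimes_A B=B\sqcup_{S^n\otimes_A B}B$ driving the lemma's induction. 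Adopting the convention $S^{-1}=\emptyset$, so that $S^{-1}\otimes_A B=A$ and $\theta$ itself is the comparison map of $f$, I obtain that $\Delta^{(n)}\theta$ is surjective on $\pi_0$ precisely when $p$ has the right lifting property against $S^{n-1}\otimes_A B\rightarrow B$.

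To conclude, the lemma gives $S^{n}\otimes_A B\rightarrow B\in K$ for every $n\geq 0$, while $f=S^{-1}\otimes_A B\rightarrow B$ lies in $K$ by hypothesis; since $p$ is a $K$-fibration it has the right lifting property against all of them. Hence every iterated diagonal of $\theta$ is $\pi_0$-surjective, $\theta$ is an equivalence, and as $f$ was arbitrary $p$ is right orthogonal to $K$. I expect the main obstacle to be the bookkeeping in the identification of the previous paragraph --- that forming the diagonal of the comparison map corresponds on the source to the pushout $B\sqcup_{(-)}B$ along the fold, so that the diagonal tower is exactly the suspension tower $S^n\otimes_A B$ appearing in the lemma. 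Everything else is the standard truncation bootstrapping for spaces.
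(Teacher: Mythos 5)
Your proposal is correct and is essentially the paper's own argument: both proofs feed the lemma's conclusion that $S^n\otimes_A B\rightarrow B$ lies in $K$ for all $n$ into the $K$-fibration lifting property, and then bootstrap the resulting family of $\pi_0$-surjectivity statements into full orthogonality by the standard Whitehead-style criterion for spaces. The only difference is packaging: the paper records the sphere-indexed lifting properties as epimorphisms $Z\rightarrow Z^{S^n}$ where $Z$ is the space of lifts (the fibre of your $\theta$), whereas you record them as $\pi_0$-surjectivity of the iterated diagonals of $\theta$ itself --- equivalent formulations of the same conditions.
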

\begin{proof}

  Let $X\rightarrow Y$ be a $K$-fibration, $f:U\rightarrow V$ in $K$.
  The lifting properties for $S^n\otimes_UV\rightarrow V$ tell us that 
  \[
    \Map_{U\slice C\slice Y}(V,X) \rightarrow \Map_{U\slice C \slice Y}(V,X) ^{S^n}
  \]
  is an epimorphism of spaces for every $n:\N$ (see \textbf{Conventions} for notation).\qedhere

\end{proof}  

\begin{corollary}

  Every $K$-fibration with $K$-local target has $K$-local source.

\end{corollary}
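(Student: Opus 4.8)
The plan is to read the conclusion straight off the orthogonality established in Proposition~\ref{bousfield/fibration/orthogonality}, using only that equivalences of spaces are stable under base change. Recall that an object $Z$ is $K$-local precisely when, for every $f:U\rightarrow V$ in $K$, precomposition $f^*:\Map(V,Z)\rightarrow\Map(U,Z)$ is an equivalence. So, fixing an arbitrary $f:U\rightarrow V$ in $K$, it suffices to show that $f^*:\Map(V,X)\rightarrow\Map(U,X)$ is an equivalence under the hypothesis that the corresponding map for $Y$ is.

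First I would invoke Proposition~\ref{bousfield/fibration/orthogonality}: since $p:X\rightarrow Y$ is a $K$-fibration it is right orthogonal to $K$, which is exactly the statement (the unique right lifting property, cf.~the discussion after Definition~\ref{bousfield/model/fibration}) that the naturality square
\[
  \begin{tikzcd}
    \Map(V,X) \ar[r,"f^*"] \ar[d,"p_*"'] & \Map(U,X) \ar[d,"p_*"] \\
    \Map(V,Y) \ar[r,"f^*"'] & \Map(U,Y)
  \end{tikzcd}
\]
is a homotopy pullback in $\Space$. The one point deserving care here is to extract orthogonality in this pullback form rather than as a bare lifting property: the argument of Proposition~\ref{bousfield/fibration/orthogonality}, showing each space of lifts has vanishing higher homotopy, is precisely what upgrades ``lifts exist'' to ``the square is cartesian.''

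The remaining step is then purely formal, and is where the hypothesis on $Y$ enters. Since $Y$ is $K$-local, the bottom horizontal map $f^*:\Map(V,Y)\rightarrow\Map(U,Y)$ is an equivalence. In the pullback square above, the top horizontal map $f^*:\Map(V,X)\rightarrow\Map(U,X)$ is the base change of the bottom one along the vertical maps $p_*$, and equivalences in $\Space$ are stable under base change; hence it too is an equivalence. As $f$ was an arbitrary element of $K$, this says exactly that $X$ is $K$-local. I therefore expect no genuine obstacle: everything reduces to invoking the already-proved orthogonality correctly and to the stability of equivalences under pullback.
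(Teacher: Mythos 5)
Your proposal is correct and matches the paper's intent: the corollary is stated without proof as an immediate consequence of Proposition~\ref{bousfield/fibration/orthogonality}, and your argument (orthogonality gives a Cartesian naturality square of mapping spaces, locality of $Y$ makes the bottom map an equivalence, and pullback-stability of equivalences transfers this to the top map) is exactly the standard deduction the paper is leaving implicit.
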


\begin{corollary} \label{bousfield/fibration/weak-equivalence-implies-iso}

  Let $X\stackrel{f}{\rightarrow}Y\stackrel{g}{\rightarrow}Z$ be a string of maps, and suppose that $g$ and $gf$ are fibrations and $f$ is a weak equivalence.
  Then $f$ is an isomorphism.
  
\end{corollary}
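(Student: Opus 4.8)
The plan is to exploit the orthogonality of Proposition~\ref{bousfield/fibration/orthogonality}, which tells us that a fibration is right orthogonal to every weak equivalence, so that the relevant spaces of lifts against $f$ are contractible. I will use this twice: first to extract a section of $f$, and then to promote that section to a genuine inverse.

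First I would produce a section. Since $gf$ is a fibration and $f$ is a weak equivalence, the commuting square
\[
  \begin{tikzcd}
    X \ar[r, "\id_X"] \ar[d, "f"'] & X \ar[d, "gf"] \\
    Y \ar[r, "g"'] & Z
  \end{tikzcd}
\]
has a contractible space of lifts; choosing one yields $s\colon Y\rightarrow X$ together with identifications $sf\simeq\id_X$ and $(gf)s\simeq g$. It then remains to check that $fs\simeq\id_Y$. For this I would apply orthogonality a second time, now to the square
\[
  \begin{tikzcd}
    X \ar[r, "f"] \ar[d, "f"'] & Y \ar[d, "g"] \\
    Y \ar[r, "g"'] & Z
  \end{tikzcd}
\]
whose left edge is again the weak equivalence $f$ and whose right edge is the fibration $g$. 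Both $\id_Y$ and $fs$ solve this lifting problem --- the latter because $(fs)f\simeq f(sf)\simeq f$ and $g(fs)\simeq(gf)s\simeq g$ by the identities obtained in the first step. As the space of lifts is contractible, $\id_Y$ and $fs$ are canonically identified, so $s$ is a two-sided inverse of $f$ and $f$ is an isomorphism.

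This is essentially the standard orthogonality manipulation, and I anticipate no serious obstacle. The only point demanding care is the $\infty$-categorical bookkeeping: `right orthogonal' supplies a \emph{contractible} --- not merely inhabited --- space of lifts, so the composites above agree only up to coherent homotopy rather than strictly, and the uniqueness invoked in the second square must be read off from this contractibility rather than from a set-level unique lifting property. One should also note that both hypotheses are genuinely used, the fibrancy of $gf$ in the first lift and that of $g$ in the second.
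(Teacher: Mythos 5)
Your proof is correct, but it takes a more hands-on route than the paper. The paper's own proof is a two-line citation: since $g$ and $gf$ are right orthogonal to the weak equivalences (Proposition~\ref{bousfield/fibration/orthogonality}), the right-cancellation property of right-orthogonal classes (\cite[Prop.~5.2.8.6.(3)]{HTT}) puts $f$ itself in $W^\perp$; being both a weak equivalence and right orthogonal to weak equivalences, $f$ is orthogonal to itself and hence invertible. What you have done is inline exactly the content of those two cited facts: your first lifting square (against the fibration $gf$) is the mechanism behind the cancellation property, and your second square (against $g$) plays the role of the ``self-orthogonal implies invertible'' step, producing the homotopy $fs\simeq\id_Y$ directly rather than abstractly. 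The trade-off is clear: the paper's argument is shorter and delegates the bookkeeping to general theory, while yours is self-contained and makes visible where each hypothesis enters ($gf$ fibrant for the section, $g$ fibrant for two-sidedness). The one point worth tightening in your write-up is the passage from ``$s$ is a lift of the first square'' to ``$fs$ is a point of the lift space of the second square'': this is cleanest if you observe that $fs$ is the image of $s$ under post-composition with $f$ viewed as a morphism $(X,\id_X,gf)\rightarrow(Y,f,g)$ in the sandwich category $X\slice C\slice Z$, so that the required coherence data comes for free rather than being assembled by hand from the two triangle homotopies. You flag this issue correctly; that observation is how one discharges it.
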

\begin{proof}

  By \cite[Prop.~5.2.8.6.(3)], $f$ is right orthogonal to itself, hence invertible. \qedhere
  
\end{proof}

In \cite[Ex.~2.12]{mazelgee2015modela}, the author asks if there are criteria that guarantee the existence of left localisation model structure.
In fact, as in classical model category theory (and by the same argument), they exist quite generally:

\begin{theorem}[Existence of Bousfield localisation for combinatorial model $\infty$-categories] \label{bousfield/model/exists}

  Let $L:C\rightarrow D$ be a left Bousfield localisation of a presentable $\infty$-category.
  Then $C$ admits a left Bousfield localisation model structure which presents $L$.
  
\end{theorem}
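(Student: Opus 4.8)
The plan is to exhibit the two weak factorisation systems that together constitute the desired model structure, namely $(C^{\Delta^1},\,\mathcal E)$, where $\mathcal E$ is the class of equivalences, and $(W_L,\,L\text{-fibrations})$. The first is immediate: an equivalence is right orthogonal to every morphism, every morphism is left orthogonal to the equivalences, and any $f$ factors as $f=\id\circ f$. This also forces the trivial fibrations to be exactly the equivalences, as required by Definition \ref{bousfield/model}, so no further work is needed there. All the content is in the second system.

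First I would record the formal properties of $W_L$. It satisfies the two-out-of-three property and is closed under retracts, since both hold for the equivalences of $D$ and $W_L$ is the preimage of these under the functor $L^{\Delta^1}\colon C^{\Delta^1}\to D^{\Delta^1}$. Because $L$ is a left adjoint it preserves all colimits; since equivalences are stable under pushout and under transfinite composition, it follows that $W_L$ is closed under both operations inside $C^{\Delta^1}$ --- for a pushout $f'$ of some $f\in W_L$, applying $L$ yields a pushout of the equivalence $Lf$, again an equivalence, and likewise for colimits of towers.

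The key step is to produce a \emph{small} set $J\subseteq W_L$ that generates $W_L$ as a weakly saturated class, i.e.\ with $W_L\subseteq J-\mathrm{cof}$. Lemma \ref{bousfield/generation} exhibits $L$ as generated by the (large) class of unit maps $X\to RLX$; to cut this down to a set I would invoke accessibility. As the right adjoint is accessible, $L$ is accessible, whence $W_L$ --- the full subcategory of $C^{\Delta^1}$ spanned by the $L$-equivalences --- is accessible and accessibly embedded. By the solution-set argument underlying Jeff Smith's recognition theorem, whose $\infty$-categorical form is available precisely because $C$ is presentable, an accessible such class is generated under the saturation operations by a small subset $J$. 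This is the $\infty$-analogue of the classical fact and is the main obstacle of the proof.

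Granting $J$, I would run the small object argument on it --- permitted because $C$ is presentable, so every object is small --- to obtain functorial factorisations $f=p\circ i$ with $i\in J-\mathrm{cell}\subseteq W_L$ (using the closure properties above) and $p$ a $J$-fibration. The orthogonality bookkeeping $J\subseteq W_L$ together with $W_L\subseteq J-\mathrm{cof}$ gives that the $J$-fibrations coincide with the maps right orthogonal to all of $W_L$, that is, with the $L$-fibrations, so the right class is as claimed. The usual retract argument then upgrades this to the lifting characterisation: if $g$ is left orthogonal to every $L$-fibration, factor $g=p\circ i$ as above, lift $g$ against $p$, and conclude that $g$ is a retract of $i\in W_L$, hence lies in $W_L$. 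This identifies the class left orthogonal to the $L$-fibrations with $W_L$, completing the second weak factorisation system. Finally, consistency of the two systems --- that a map which is both an $L$-fibration and an $L$-equivalence is an equivalence --- is Corollary \ref{bousfield/fibration/weak-equivalence-implies-iso}, since such a map is right orthogonal to itself.
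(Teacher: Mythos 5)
Your overall architecture is sound, and most of the bookkeeping is correct: the trivial weak factorisation system (all morphisms, equivalences), the closure properties of $W_L$, the retract argument, and the compatibility check via Corollary~\ref{bousfield/fibration/weak-equivalence-implies-iso} are all fine. But the proof stands or falls on the one step you do not carry out: producing a \emph{small} set $J\subseteq W_L$ with $W_L\subseteq J-\mathrm{cof}$. You attribute this to ``the solution-set argument underlying Jeff Smith's recognition theorem, whose $\infty$-categorical form is available precisely because $C$ is presentable.'' That form is not available in the literature this paper works with: the recognition theorem for model $\infty$-categories, \cite[Thm.~3.11]{mazelgee2015modela}, is Kan-style --- it \emph{consumes} generating sets $I$ and $J$ as input; it does not produce them. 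Smith's lemma (classically \cite[\S A.2.6]{HTT}) is a genuinely nontrivial accessibility and cardinality argument, and redoing it for model $\infty$-categories is more work than the rest of your proof combined, so invoking it is a real gap, sitting exactly at the crux. Note also that you cannot substitute what \emph{is} available for accessible localisations, namely that $W_L$ is of small generation as a \emph{strongly} saturated class (\cite[\S5.5.4]{HTT}): strong saturation closes under 2-out-of-3, so small generation in that sense does not yield a set $J$ with $W_L\subseteq J-\mathrm{cof}$; the paper's Remark~\ref{thomason/generator} stresses precisely this distinction.

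The paper avoids Smith's machinery entirely by naming the set: choose $\kappa$ so that the right adjoint $R$ is $\kappa$-accessible, put $I=(C^\kappa)^{\Delta^1}$ and $J=W_L\cap I$, and feed these to \cite[Thm.~3.11]{mazelgee2015modela}; the only nonobvious hypothesis is $(I-\mathrm{cof})\cap W_L\subseteq J-\mathrm{cof}$, which is reduced by the small object argument to a cell-level inclusion and settled using the closure of $W_L$ under pushouts and transfinite composition. If you want to complete your write-up along your own lines, the accessibility fact you did establish --- that $W_L$ is the preimage of the equivalences under the accessible functor $L^{\Delta^1}$, hence accessible and accessibly embedded --- is the right starting point, but you must then convert it into the concrete statement that the $\kappa$-compact $L$-equivalences (the paper's $J$) generate: every $L$-equivalence is a $\kappa$-filtered colimit in $C^{\Delta^1}$ of arrows in $J$, and maps with the right lifting property against $J$ lift against such colimits (for which the orthogonality upgrade of Proposition~\ref{bousfield/fibration/orthogonality} is the relevant tool, since limits of contractible lifting spaces remain contractible). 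As written, the proposal stops being a proof at the point where you assert Smith.
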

\begin{proof}

  Let $\kappa$ be a regular cardinal such that the inclusion $R$ of $D$ into $C$ is $\kappa$-accessible. 
  We set $I\defeq (C^\kappa)^{\Delta^1}$ (where $C^\kappa\subset C$ is the category of $\kappa$-compact objects) and $J\defeq W_L\cap I \subseteq I$.
  After \cite[Thm.~3.11]{mazelgee2015modela}, we must show that $(I-\mathrm{cof}) \cap W_L\subseteq (J-\mathrm{cof}) $ (the first two conditions being obvious).
  By the small object argument \cite[\S10.5]{hirschhorn2009model}, it is enough to show that $(I-\mathrm{cell})\cap W_L\subseteq (J-\mathrm{cell})$.
  This follows from the fact that $W_L$ is closed under pushouts and transfinite composition. \qedhere
  
\end{proof}

\begin{remark}

  I chose to define only the case of Bousfield localisation relevant to our examples.
  However, the arguments apply more generally to the case where $C$ already has a non-trivial model structure: Proposition \ref{bousfield/fibration/orthogonality} works as long as trivial cofibrations are closed under 2-out-of-3 (which is automatic if all maps are cofibrations), and Theorem \ref{bousfield/model/exists} applies as long as weak equivalences are closed under colimits.
  
\end{remark}  



\section{Locally Cartesian} \label{locally-cartesian/}

This section is devoted to obtaining useful characterisations of locally Cartesian localisations.

\begin{proposition}
\label{locally-cartesian/criterion}

  Let $L:C\rightarrow D$ be a left Bousfield localisation. Then the conditions \ref{locally-cartesian/criterion/cartesian-over-local}, \ref{locally-cartesian/criterion/right-proper}, and \ref{locally-cartesian/criterion/homotopy-pullback} are equivalent, and collectively imply \ref{locally-cartesian/criterion/cartesian-unit}:
  \begin{enumerate}
    
    \item \label{locally-cartesian/criterion/cartesian-over-local}
      For any $X:C$ and $S\rightarrow LX$ in $D$ the induced map $S\times_{LX}X\rightarrow S$ is an $L$-equivalence.
      
    \item \label{locally-cartesian/criterion/right-proper}
      $C$ admits a right proper model structure presenting $L$.

    \item \label{locally-cartesian/criterion/homotopy-pullback}
      $L$ preserves pullbacks of $L$-fibrations.
      
    \item \label{locally-cartesian/criterion/cartesian-unit}
      a morphism $f:X\rightarrow Y$ is an $L$-fibration if and only if the square
      \[
        \begin{tikzcd}
          X \ar[r] \ar[d] & LX \ar[d] \\
          Y \ar[r] & LY
        \end{tikzcd}
      \]
      is Cartesian.
  \end{enumerate}
  
\end{proposition}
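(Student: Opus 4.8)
The plan is to prove $(1)$, $(2)$, $(3)$ equivalent via the cycle $(2)\Rightarrow(1)$, $(3)\Rightarrow(1)$, $(1)\Rightarrow(3)$, $(1)\Rightarrow(2)$, and to obtain $(4)$ as a consequence of $(1)$ that I can feed back into the last two implications. Writing $\eta_X:X\to LX$ for the unit (an $L$-equivalence), I would rely on two facts from the previous section: every map between local objects, and every base change of such, is an $L$-fibration (the Example after Definition~\ref{bousfield/model/fibration}); and, as $W_L$ is stable under pushout, $L$-fibrations are right orthogonal to $L$-equivalences (Proposition~\ref{bousfield/fibration/orthogonality}), so an $L$-equivalence that sits between two $L$-fibrations is an isomorphism (Corollary~\ref{bousfield/fibration/weak-equivalence-implies-iso}). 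I would also note at the outset that the model structure presenting $L$ is the essentially unique left localisation model structure, so that \emph{right proper} means precisely that base change along $L$-fibrations preserves $L$-equivalences.

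The two implications into $(1)$ are short. For $(2)\Rightarrow(1)$: given $X$ and $S\to LX$ in $D$, the map $S\to LX$ is a map of local objects, hence an $L$-fibration, while $\eta_X$ is a weak equivalence, so right properness says exactly that its base change $S\times_{LX}X\to S$ is an $L$-equivalence. For $(3)\Rightarrow(1)$: I would instead regard $S\times_{LX}X$ as the base change of the $L$-fibration $S\to LX$ along $\eta_X$; applying $(3)$ to this pullback and using that $L\eta_X$ is an equivalence forces the top edge $L(S\times_{LX}X)\to LS\cong S$ to be an equivalence, so the projection $S\times_{LX}X\to S$ is an $L$-equivalence.

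The heart of the matter is $(1)\Rightarrow(4)$. The converse direction of the `if and only if' is unconditional: if the unit square is Cartesian then $f$ is the base change of the local map $Lf:LX\to LY$ along $\eta_Y$, hence an $L$-fibration. For the forward direction I would set $P\defeq Y\times_{LY}LX$, with comparison map $u:X\to P$ and projection $p:P\to Y$. Applying $(1)$ to the object $Y$ and the local map $LX\to LY$ shows the projection $P\to LX$ is an $L$-equivalence; since the composite $X\to P\to LX$ is $\eta_X$, two-out-of-three makes $u$ an $L$-equivalence. As $p$ and $pu=f$ are both $L$-fibrations, Corollary~\ref{bousfield/fibration/weak-equivalence-implies-iso} upgrades $u$ to an isomorphism, giving the Cartesian square. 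This is the step I expect to be the main obstacle, since it is where condition $(1)$ and the self-orthogonality of weak equivalences between fibrations have to be combined just so; the remaining implications are formal pullback bookkeeping.

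To close the cycle I would use $(4)$. For $(1)\Rightarrow(3)$, given an $L$-fibration $f$ and any $g:Y'\to Y$, I rewrite $X\times_YY'\cong Y'\times_{LY}LX$ using $(4)$ and then factor $Y'\to LY$ through $\eta_{Y'}$ to present this as the base change of $\eta_{Y'}$ along the local projection $S\to LY'$, where $S\defeq LY'\times_{LY}LX$ is a local object; condition $(1)$ then makes $X\times_YY'\to S$ an $L$-equivalence, so $L(X\times_YY')\cong S\cong LX\times_{LY}LY'$, which says $L$ preserves the pullback. For $(1)\Rightarrow(2)$, right properness reduces to showing the base change of an $L$-equivalence $g:A\to Y$ along the $L$-fibration $f$ is an $L$-equivalence; writing $X\times_YA\cong A\times_{LY}LX$ and $X\cong Y\times_{LY}LX$ via $(4)$ and applying the now-available $(3)$ to move $L$ inside both pullbacks, the comparison becomes the map $LA\times_{LY}LX\to LY\times_{LY}LX$ induced by $Lg$, which is an equivalence.
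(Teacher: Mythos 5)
Your proposal is correct and follows essentially the same route as the paper: the crucial implication (1)$\Rightarrow$(4) is argued identically (compare $X$ with $Y\times_{LY}LX$ via two-out-of-three and invoke Corollary \ref{bousfield/fibration/weak-equivalence-implies-iso}), and your proofs of (2)$\Rightarrow$(1) and (3)$\Rightarrow$(1) coincide with the paper's. The only difference is the routing of the remaining pullback bookkeeping --- the paper derives (2) from (1)+(4) and then (3) from (2)+(4), whereas you derive (3) from (1)+(4) and then (2) from (3) --- which is immaterial.
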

\begin{proof}

  \begin{labelitems}
  
    \item[\ref{locally-cartesian/criterion/cartesian-over-local}$\Rightarrow$\ref{locally-cartesian/criterion/cartesian-unit}]:
      Form the diagram
      \[
        \begin{tikzcd}
          X \ar[rrd, bend left, "W"] \ar[rd, "W" description] \ar[ddr, bend right, "F"'] \\
          & Y\times_{LY}LX \ar[r, "W"] \ar[d, "F"] & LX \ar[d, "F"] \\
          & Y \ar[r, "W"] & LY
        \end{tikzcd}
      \]
      where the arrow $Y\times_{LY}LX \rightarrow LX$, and therefore $h:X\rightarrow Y\times_{LY}LX$, is an $L$-equivalence.
      By Corollary \ref{bousfield/fibration/weak-equivalence-implies-iso}, $h$ is an isomorphism.
      
    \item[\ref{locally-cartesian/criterion/cartesian-over-local}+\ref{locally-cartesian/criterion/cartesian-unit}$\Rightarrow$\ref{locally-cartesian/criterion/right-proper}]
      By composing the Cartesian squares 
      \[
        \begin{tikzcd}
          X\times_ZY \ar[r] \ar[d] & X \ar[d, "F_L"] \ar[r] & LX \ar[d] \\
          Z \ar[r, "W_L"] & Y \ar[r] & LY 
        \end{tikzcd}
      \]
      (the second of which is Cartesian by \ref{locally-cartesian/criterion/cartesian-unit}), we find $X\times_YZ \cong X\times_{LY}LZ$.
      Apply \ref{locally-cartesian/criterion/cartesian-over-local}.  

    \item[\ref{locally-cartesian/criterion/right-proper}$\Rightarrow$\ref{locally-cartesian/criterion/cartesian-over-local}]
      Apply right propriety to the pullback of the fibration $S\rightarrow LX$ along the weak equivalence $X\rightarrow LX$.
      
    \item[\ref{locally-cartesian/criterion/right-proper}+\ref{locally-cartesian/criterion/cartesian-unit}$\Rightarrow$\ref{locally-cartesian/criterion/homotopy-pullback}]
      Let $X\rightarrow Y$ be a fibration. Applying \ref{locally-cartesian/criterion/cartesian-unit} to the diagram
      \[
        \begin{tikzcd}
          X\times_YZ \ar[r] \ar[d] & X \ar[r, "W_L"] \ar[d, "F_L"] & LX \ar[d, "F_L"] \\
          Z \ar[r] & Y \ar[r, "W_L"] & LY
        \end{tikzcd}
      \]
      we obtain that $X\times_YZ \cong LX\times_{LY}Z$.
      But then, applying \ref{locally-cartesian/criterion/right-proper} to the Cartesian squares
      \[
        \begin{tikzcd}
          Z\times_{LY}LX \ar[r, "W_L"] \ar[d, "F_L"] & LZ\times_{LY}LX \ar[r] \ar[d, "F_L"] & LX \ar[d, "F_L"] \\
          Z \ar[r, "W_L"] & LZ \ar[r] & LY
        \end{tikzcd}
      \]
      we obtain that $LX\times_{LY}LZ\cong L(LX\times_{LY}Z)\cong L(X\times_ZY)$ because both are local and weakly equivalent to $LX\times_{LY}Z$.      

    \item[\ref{locally-cartesian/criterion/homotopy-pullback}$\Rightarrow$\ref{locally-cartesian/criterion/cartesian-over-local}]
      Apply \ref{locally-cartesian/criterion/homotopy-pullback} to the pullback $S\times_{LX}X$.
    \qedhere
      
  \end{labelitems}

\end{proof}

\begin{definition}[Locally Cartesian localisation]

  A Bousfield localisation $L:C\rightarrow D$ is said to be \emph{locally Cartesian} if it satisfies the equivalent conditions of Proposition \ref{locally-cartesian/criterion}.

\end{definition}

The following argument is adapted from the proof of \cite[Thm.~1.5.4]{cisinski2006prefaisceaux}:
  
\begin{proposition}[Recognition of locally Cartesian localisations] \label{locally-cartesian/generator}

  Let $L:C\rightarrow D$ be a left Bousfield localisation of a presentable category generated (as a Bousfield localisation) by $S\subseteq C^{\Delta^1}$.
  Suppose that every morphism $p:A\rightarrow B$ of $D$ is exponentiable in $C$, meaning that pullback $C\slice B\rightarrow C\slice A$ preserves colimits.
  Suppose further that for each diagram
  \[
    \begin{tikzcd}
      && A \ar[d, "p"] \\
      X \ar[r, "f"] & Y \ar[r] & B
    \end{tikzcd}
  \]
  with $A,B\in D$ $f\in S$, the pullback $f\times_BA:X\times_BA\rightarrow Y\times_BA$ is an $L$-equivalence.
  Then $L$ is locally Cartesian.
  
\end{proposition}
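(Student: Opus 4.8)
The plan is to verify condition~\ref{locally-cartesian/criterion/cartesian-over-local}: for every $X:C$ and every morphism $p:T\rightarrow LX$ of $D$, the base change $T\times_{LX}X\rightarrow T$ is an $L$-equivalence. Since the unit $X\rightarrow LX$ is itself an $L$-equivalence, this will follow once I show that base change along an \emph{exponentiable} morphism of $D$ carries $L$-equivalences to $L$-equivalences. So the heart of the argument is a saturation statement: the hypothesis asserts that the \emph{generators} $S$ behave well under such base change, and the work is to propagate this from $S$ to all of $W_L$.

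First I would fix an exponentiable morphism $p:A\rightarrow B$ with $A,B\in D$ (ultimately $A=T$, $B=LX$) and consider the functor $\Phi\defeq A\times_B(-):C\slice B\rightarrow C$. It factors as base change $C\slice B\rightarrow C\slice A$ followed by the forgetful functor, and both preserve colimits---the first precisely because $p$ is exponentiable---so $\Phi$ preserves colimits. Consequently the preimage $\mathcal{W}\defeq\Phi^{-1}(W_L)$ is strongly saturated in $C\slice B$: it contains the isomorphisms and is closed under $2$-out-of-$3$, retracts, and colimits of arrows, since $\Phi$ preserves all of these and $W_L$ is strongly saturated. Next I would identify the $L$-equivalences of $C\slice B$. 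Writing $S_{/B}$ for the maps of $S$ equipped with a structure map of their target to $B$, an object $(Z\rightarrow B)$ is $S_{/B}$-local if and only if $Z$ is $S$-local, i.e.~$Z\in D$ (here one uses that $B$ is local, so $\Map_C(-,B)$ inverts $S$ and the relevant square of mapping spaces is automatically Cartesian). Hence $S_{/B}$ generates the reflective subcategory $D\slice B$, and therefore its strong saturation $\overline{S_{/B}}$ is exactly the class $W_L^{/B}$ of maps over $B$ whose underlying map lies in $W_L$. The hypothesis says precisely that $\Phi(f)=f\times_BA\in W_L$ for each $f\in S_{/B}$, i.e.~$S_{/B}\subseteq\mathcal{W}$; since $\mathcal{W}$ is strongly saturated, this forces $W_L^{/B}=\overline{S_{/B}}\subseteq\mathcal{W}$.

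To conclude, take $A=T$ and $B=LX$, which is legitimate because $T,LX\in D$ and $p$ is then a morphism of $D$, hence exponentiable. The unit, regarded as the morphism $(X\rightarrow LX)\rightarrow(LX=LX)$ to the terminal object of $C\slice LX$, has underlying map the $L$-equivalence $X\rightarrow LX$, so it lies in $W_L^{/LX}\subseteq\mathcal{W}$; applying $\Phi$ shows that $T\times_{LX}X\rightarrow T$ is an $L$-equivalence, which is condition~\ref{locally-cartesian/criterion/cartesian-over-local}.

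The step I expect to cost the most care is the identification of the slice-level equivalences, i.e.~that $S_{/B}$ generates $D\slice B$ and that consequently $W_L^{/B}=\overline{S_{/B}}$. This rests on the general principle---already implicit in Lemma~\ref{bousfield/generation}---that a reflective subcategory cut out by an orthogonality condition is the localisation at the strong saturation of any class defining it, applied now inside $C\slice B$. One must also be mildly careful that, although $S_{/B}$ ranges over all structure maps to $B$ and is a priori not small, the subcategory $D\slice B$ it defines is an accessible localisation of $C\slice B$, so the identification $W_L^{/B}=\overline{S_{/B}}$ remains valid. The remaining verifications---that $\Phi$ preserves colimits and that preimages of strongly saturated classes under colimit-preserving functors are strongly saturated---are routine.
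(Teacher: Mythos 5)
Your proof is correct, and it reaches the paper's target --- condition \ref{locally-cartesian/criterion/cartesian-over-local} of Proposition \ref{locally-cartesian/criterion} --- by the same underlying engine (exponentiability makes base change colimit-preserving, and strong saturation then propagates the hypothesis from the generators $S$ to all of $W_L$), but the decomposition is genuinely different. The paper works entirely in $C$: it introduces the class $L-\mathrm{acy}$ of maps \emph{all} of whose base changes along morphisms of $D$ are $L$-equivalences, verifies by hand in Lemma \ref{locally-cartesian/acyclic/saturated} that this class is strongly saturated --- where the 2-out-of-3 step needs the small trick of uniquely extending a map $Y\rightarrow B$ along an $L$-equivalence $Y\rightarrow Z$ --- and concludes from the sandwich $S\subseteq L-\mathrm{acy}\subseteq W_L$ together with minimality of the strong saturation $\overline{S}=W_L$. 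You instead fix a single morphism $p:A\rightarrow B$ of $D$, pass to the slice $C\slice B$, and pull $W_L$ back along $\Phi=A\times_B(-)$: strong saturation of $\mathcal{W}=\Phi^{-1}(W_L)$ is then automatic (preimage of a strongly saturated class under a colimit-preserving functor), and the 2-out-of-3 trick disappears entirely. The price is the slice-level compatibility that you correctly single out as the crux and that the paper never needs: that the $S_{/B}$-local objects of $C\slice B$ are exactly $D\slice B$ (using $B\in D$), whence $\overline{S_{/B}}=W_L^{/B}$, the class of maps over $B$ with underlying arrow in $W_L$. Both halves of that identification do hold: the inclusion $\overline{S_{/B}}\subseteq W_L^{/B}$ is minimality, since the underlying-arrow functor preserves colimits and so $W_L^{/B}$ is strongly saturated and contains $S_{/B}$; conversely, any map over $B$ with underlying arrow in $W_L$ is an $S_{/B}$-equivalence, because for $Z\in D$ the slice mapping spaces are fibres of $\Map_C(-,Z)\rightarrow\Map_C(-,B)$ and both of these maps are equivalences, and then $S_{/B}$-equivalences coincide with $\overline{S_{/B}}$ by the slice analogue of \cite[Prop.~5.5.4.15]{HTT}. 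One quibble on your final caveat: accessibility of $D\slice B$ is not by itself the reason the identification survives the size issue; the honest repair is that $S_{/B}$ is essentially small, because each space $\Map_C(Y,B)$ is essentially small (presentable categories are locally small), so one may replace $S_{/B}$ by a small set of representatives without changing its local objects or its strong saturation. With that repair, your argument is complete; what it buys over the paper's is that all closure properties come for free from functoriality, at the cost of importing the standard but nontrivial fact that Bousfield localisations generated by $S$ slice correctly over local objects.
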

\begin{proof}

  Let us say that a morphism $f:X\rightarrow Y$ in $C$ is $L$-\emph{acyclic} if for any map $p:A\rightarrow B$ in $D$ and $Y\rightarrow B$, the pullback $f\times_BA:X\times_BA\rightarrow Y\times_BA$ is an $L$-equivalence.
  Write $L-\mathrm{acy}$ for the set of $L$-acyclic maps; we have $S\subseteq L-\mathrm{acy}$ by hypothesis and $ L-\mathrm{acy}\subseteq W$ by applying the criterion to $p=\id_A$ for all local objects $A$.
  Lemma \ref{locally-cartesian/acyclic/saturated} below shows that $L-\mathrm{acy}$ is strongly saturated; thus $L-\mathrm{acy}=W$ and we are done. \qedhere
  
\end{proof}

\begin{lemma} \label{locally-cartesian/acyclic/saturated}

  The set of $L$-acyclic morphisms is strongly saturated.
  
\end{lemma}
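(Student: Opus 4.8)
The plan is to verify that the class of $L$-acyclic maps satisfies the closure properties characterising a strongly saturated class — stability under pushout, stability under the relevant colimits and retracts in $C^{\Delta^1}$, and the two-out-of-three property. From these the fixed-point description follows formally: a class with these properties is its own strong saturation, so every morphism inverted by the localisation of $C$ at the $L$-acyclic maps is already $L$-acyclic. Throughout I will use that $W_L$ itself is strongly saturated (stable under pushout, colimits, retracts, and two-out-of-three), and, for the colimit arguments, the exponentiability of the maps of $D$ assumed in Proposition~\ref{locally-cartesian/generator}.

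First I would record a reduction pinning down a canonical base for the test. Since $D$ is reflective it is closed under limits in $C$, and any $y:Y\to B$ with $B\in D$ factors uniquely through the unit $Y\to LY$. Given test data $(p:A\to B,\,y:Y\to B)$, set $\tilde A:=A\times_B LY\in D$; then $X\times_B A\cong X\times_{LY}\tilde A$ and $Y\times_B A\cong Y\times_{LY}\tilde A$, and conversely every $\tilde A\in D\slice LY$ arises (taking $B=LY$, $y$ the unit). Hence $f:X\to Y$ is $L$-acyclic if and only if $f\times_{LY}\tilde A:X\times_{LY}\tilde A\to Y\times_{LY}\tilde A$ lies in $W_L$ for every $\tilde A\in D\slice LY$. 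Taking $\tilde A=LY$ recovers $f$, so every $L$-acyclic map is an $L$-equivalence.

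With this normal form, two-out-of-three becomes tractable. For composition and right cancellation I would fix $\tilde C\in D\slice LZ$ and note that the factor $X\times_{LZ}\tilde C\to Y\times_{LZ}\tilde C$ is a base change of $f$ along $Y\times_{LZ}\tilde C\to Y$, which factors through $LY$ (by naturality of the unit) and is therefore a legitimate $D\slice LY$-test for $f$; the conclusion follows from two-out-of-three in $W_L$. The delicate case — and the main obstacle — is left cancellation: from $g$ and $gf$ acyclic one must deduce $f$ acyclic, yet the tests for $f$ live over $LY$ while those for $g$ and $gf$ live over $LZ$, and $(Lg)^\ast:D\slice LZ\to D\slice LY$ is a priori far from essentially surjective. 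The key observation that saves this is that an acyclic map is an $L$-equivalence: thus $g\in W_L$ forces $Lg:LY\to LZ$ to be an equivalence, $(Lg)^\ast$ becomes an equivalence of slices, every $\tilde A\in D\slice LY$ takes the form $LY\times_{LZ}\tilde C$, and $f\times_{LY}\tilde A$ is identified with $X\times_{LZ}\tilde C\to Y\times_{LZ}\tilde C$, which lies in $W_L$ by two-out-of-three there.

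It remains to treat stability under pushout and colimits, where exponentiability is essential. To show a pushout $f'$ of an acyclic $f$ along $X\to X'$ is acyclic, I would test over $\tilde A\in D\slice LY'$: as $\tilde A\to LY'$ is a map of $D$ it is exponentiable, so pullback along it preserves the pushout square (whose vertices all map to $LY'$); the resulting square exhibits $f'\times_{LY'}\tilde A$ as a pushout of $f\times_{LY'}\tilde A$, and the latter is a $D\slice LY$-test for $f$ (since $Y\to LY'$ factors through $LY$) hence lies in $W_L$, which is pushout-stable. Stability under transfinite composition, filtered colimits, and retracts follows by the same pattern, commuting the base change with the colimit via exponentiability and invoking the corresponding closure property of $W_L$. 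Assembling these verifications shows the $L$-acyclic maps form a strongly saturated class.
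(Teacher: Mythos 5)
Your proof is correct and takes essentially the same route as the paper's: verify that the $L$-acyclic maps are stable under pushouts and colimits (via exponentiability of maps in $D$) and satisfy two-out-of-three, then conclude by the formal theory of strongly saturated classes in a presentable category. Your normal-form reduction to tests $\tilde A \in D\slice LY$, and the use of $Lg$ being an equivalence in the left-cancellation case, is just a repackaging of the paper's step of uniquely extending a test map $Y\rightarrow B$ along the $L$-equivalence $g$ to a map $Z\rightarrow B$.
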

\begin{proof}

  We will show that $L-\mathrm{acy}$ satisfies the 2-out-of-3 rule and is stable under pushouts and colimits in $C^{\Delta^1}$, then apply \cite[Prop.~5.5.4.15.(4)]{HTT}.
  \begin{labelitems}
  
    \item[pushout]
      Let 
      \[
        \begin{tikzcd}
          X \ar[r, "f"] \ar[d] \ar[r] &  Y \ar[d] \\
          X' \ar[r, "{f'}"] & Y'
        \end{tikzcd}
      \]
      be a pushout square in $C$ and suppose $f\in L-\mathrm{acy}$.
      Let $Y'\rightarrow B$ be any map.
      By universality of colimits, 
      \[
        \begin{tikzcd}
          X\times_BA \ar[r, "f\times_BA"] \ar[d] &  Y\times_BA \ar[d] \\
          X'\times_BA \ar[r] & Y'\times_BA
        \end{tikzcd}
      \]
      is a pushout, and by hypothesis $f\times_BA\in W$, whence $f'\times_BA$ since $W$ is stable under pushout.
      
    \item[2-out-of-3]
      Let $f:X\rightarrow Y$ and $g:Y\rightarrow Z$ with $\{g, gf\}\subseteq L-\mathrm{acy}$.
      Since $g\in W$, we may extend any map $Y\rightarrow B$ with $B\in D$ uniquely to a map $Z\rightarrow B$.
      Now pull back and apply 2-out-of-3 for $W$.
      The other two 2-out-of-3 rules follow similarly (except that no extension step is needed).
      
    \item[colimits]
      By universality of colimits and the closure of $W$.
      \qedhere
    
  \end{labelitems}
    
\end{proof}

\begin{corollary} \label{locally-cartesian/generator/strong}

  In the situation of Proposition \ref{locally-cartesian/generator}, suppose that pullbacks of elements of $S$ are in $S$.
  Then $S$ generates a locally Cartesian left Bousfield localisation.
  
\end{corollary}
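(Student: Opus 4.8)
The plan is to show that the extra assumption makes the main hypothesis of Proposition~\ref{locally-cartesian/generator} automatic, so that its conclusion applies verbatim. Concretely, fix a diagram as in that proposition: objects $A,B\in D$, a morphism $p:A\rightarrow B$, a map $Y\rightarrow B$, and a morphism $f:X\rightarrow Y$ lying in $S$. What must be checked is that the base change $f\times_BA:X\times_BA\rightarrow Y\times_BA$ is an $L$-equivalence.

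The key observation is that $f\times_BA$ is itself a pullback of $f$. Since the structure map $X\rightarrow B$ factors as $X\xrightarrow{f}Y\rightarrow B$, the pasting law for pullbacks supplies a canonical identification $X\times_BA\cong X\times_Y(Y\times_BA)$, under which the square
\[
  \begin{tikzcd}
    X\times_BA \ar[r] \ar[d, "f\times_BA"'] & X \ar[d, "f"] \\
    Y\times_BA \ar[r] & Y
  \end{tikzcd}
\]
is Cartesian. Thus $f\times_BA$ is the pullback of $f$ along the projection $Y\times_BA\rightarrow Y$.

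By the added hypothesis, pullbacks of elements of $S$ again lie in $S$, so $f\times_BA\in S$. Since $L$ is generated as a Bousfield localisation by $S$, the $L$-local objects are precisely the $S$-local objects, and an $S$-local object inverts every map of $S$; hence $S\subseteq W_L$. Therefore $f\times_BA$ is an $L$-equivalence, which is exactly the condition needed to invoke Proposition~\ref{locally-cartesian/generator}. Its conclusion is that $L$ is locally Cartesian, and since $L$ was assumed generated by $S$, this is the assertion of the corollary.

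There is no genuine obstacle in this argument; the only point requiring a moment's care is the \emph{direction} of the base change — one must recognise $f\times_BA$ as a pullback of $f$, along $Y\times_BA\rightarrow Y$, rather than as a pullback of $p$ — and this is purely formal via the pasting law. The exponentiability assumption, inherited from the setting of Proposition~\ref{locally-cartesian/generator}, is used only through that proposition and plays no further role in the present reduction.
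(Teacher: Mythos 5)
Your proof is correct and is exactly the intended (implicit) argument: the paper states this corollary without proof precisely because, as you observe, pullback-stability of $S$ makes the hypothesis of Proposition~\ref{locally-cartesian/generator} automatic, since the pasting law exhibits $f\times_BA$ as the pullback of $f\in S$ along $Y\times_BA\rightarrow Y$, hence an element of $S\subseteq W_L$. Nothing further is needed.
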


\begin{remark}

  Compare \cite[Prop.~3.4.(2)]{Hoyois_2017}, which obtains this result for localisations of presheaf categories.
  The argument there relies on a specific formula for the localisation functor valid for presheaves, and so does not generalise to our setting.
  
\end{remark}

\begin{remark}

  Note that the generating set $S$ appearing in the statement of Proposition \ref{locally-cartesian/generator} usually does not generate $W_L$ as a set of trivial cofibrations: we must also close up under 2-out-of-3.
  See Example \ref{thomason/generator}.
  
\end{remark}

For a converse to the preceding statements, we have:

\begin{proposition} \label{locally-cartesian/unit-generates}

  Let $L:C\rightarrow D$ be a locally Cartesian left Bousfield localisation.
  Let $\kappa$ be a regular cardinal such that $D\subseteq C$ is $\kappa$-accessible.
  Then the set of arrows of the form $e_X:X\rightarrow RLX$ for $X:C^\kappa$ is stable for base change and generates $L$ as a Bousfield localisation. 
  
\end{proposition}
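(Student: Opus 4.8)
The statement falls into two essentially independent parts: that the set $W\defeq\{e_X:X\rightarrow RLX \mid X:C^\kappa\}$ is stable for base change, and that it generates $L$ as a Bousfield localisation. Only the first will use that $L$ is locally Cartesian; the second is a purely accessibility-theoretic bootstrap valid for any accessible Bousfield localisation.

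For base change I read ``stable'' in the sense relevant to Proposition \ref{locally-cartesian/generator}, i.e.\ that each $e_X$ is $L$-acyclic: for every $p:A\rightarrow B$ in $D$ and every $RLX\rightarrow B$, the pullback $e_X\times_B A$ is an $L$-equivalence. The plan is to reduce this directly to condition \ref{locally-cartesian/criterion/cartesian-over-local}. Since $R$ preserves limits, $D$ is closed under limits in $C$, so $A'\defeq RLX\times_B A$ is again a local object with a map $A'\rightarrow RLX$; and because the composite $X\rightarrow RLX\rightarrow B$ factors the structure map of $X$, pullback pasting identifies $e_X\times_B A$ with the pullback $X\times_{RLX}A'\rightarrow A'$ of $e_X$ along $A'\rightarrow RLX$. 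That this is an $L$-equivalence is exactly \ref{locally-cartesian/criterion/cartesian-over-local} applied to $X$ and $A'\rightarrow RLX = LX$, which holds because $L$ is locally Cartesian. Note this uses nothing about $\kappa$-compactness: \emph{every} counit is $L$-acyclic, and the restriction to $C^\kappa$ serves only to cut $W$ down to a set.

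For generation I must show the $W$-local objects are exactly $D$. One inclusion is immediate, since each $e_X$ is an $L$-equivalence and every $L$-local object is local against all $L$-equivalences. For the converse I would bootstrap from $C^\kappa$ to all of $C$ and then invoke Lemma \ref{bousfield/generation}, which says the full family of counits $\{e_X\mid X:C\}$ generates $L$ as a localisation, so in particular its local objects are precisely $D$. Thus it suffices to check that a $W$-local object $Z$ is already local against $e_X$ for \emph{every} $X:C$. Writing $X\cong\mathrm{colim}_i\, X_i$ as a $\kappa$-filtered colimit of $\kappa$-compact $X_i$, the functor $L$ preserves this colimit and $R$ preserves it because it is $\kappa$-accessible, so $RLX\cong\mathrm{colim}_i\, RLX_i$ with $e_X$ the colimit of the $e_{X_i}$. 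Mapping into $Z$ sends these colimits to limits, and since each $\Map(RLX_i,Z)\rightarrow\Map(X_i,Z)$ is an equivalence by $W$-locality, so is $\Map(RLX,Z)\rightarrow\Map(X,Z)$; hence $Z\in D$.

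The one delicate point, and the place I expect to need care, is the interplay of cardinals in the bootstrap: beyond $\kappa$-accessibility of $R$ I also need $\kappa$ large enough that $C$ is $\kappa$-compactly generated, so that $X\cong\mathrm{colim}_i\, X_i$ with $X_i:C^\kappa$. Both conditions hold for all sufficiently large regular $\kappa$ because $C$ is presentable and $R$ is accessible, so I would simply enlarge $\kappa$ at the outset if necessary; enlarging $\kappa$ only enlarges $C^\kappa$, hence $W$, which can only help. This cardinality bookkeeping, rather than any homotopical subtlety, is the main thing to pin down precisely.
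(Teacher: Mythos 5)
Your proof is correct and is essentially the paper's own argument written out in full: the paper's entire proof is the one-line citation ``By Lemma \ref{bousfield/generation} and criterion \ref{locally-cartesian/criterion/cartesian-unit} of Proposition \ref{locally-cartesian/criterion}'', and you supply exactly the two steps it leaves implicit --- the pullback-pasting reduction of base-change stability to the locally Cartesian criterion (you invoke condition \ref{locally-cartesian/criterion/cartesian-over-local}, an equivalent and indeed more directly applicable form of the cited condition \ref{locally-cartesian/criterion/cartesian-unit}), and the $\kappa$-filtered-colimit bootstrap that upgrades Lemma \ref{bousfield/generation}, which concerns the class of all counits, to the set of counits of $\kappa$-compact objects. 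Your closing worry about cardinals is not a real obstruction: reading ``$D\subseteq C$ is $\kappa$-accessible'' in the standard sense of a $\kappa$-accessible inclusion functor (so that $C$ itself is $\kappa$-accessible and every object is a $\kappa$-filtered colimit of $\kappa$-compact ones) makes your bootstrap work for the given $\kappa$, with no enlargement needed.
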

\begin{proof}
  
  By Lemma \ref{bousfield/generation} and criterion \ref{locally-cartesian/criterion/cartesian-unit} of Proposition \ref{locally-cartesian/criterion}. \qedhere

\end{proof}


One does not have to go very far to find interesting examples of localisations that are locally Cartesian but not left exact.

\begin{definition} \label{nullification}

  Let $C$ be a presentable category, $A\subseteq C$ a small set of objects. 
  The \emph{nullification} of $A$ is the Bousfield localisation of $C$ generated by the product projections $\pi_Y:X\times Y\rightarrow Y$ for $X\in A$, $Y:C$.
  By Corollary \ref{locally-cartesian/generator/strong}, a nullification functor such that all morphisms between local objects are exponentiable is locally Cartesian.
  
\end{definition}


\section{Classification of fibrations}

We now turn to a description of the \emph{category of fibrations} $\mathrm{Fib}\slice X$ over a fixed object $X:C$, which is defined to be the full subcategory of the slice $C\slice X$ whose objects are $L$-fibrations.
Throughout this section, we fix a locally Cartesian Bousfield localisation $L:C\rightarrow D$.

\begin{corollary}[to Proposition \ref{locally-cartesian/criterion}] \label{locally-cartesian/fibration/classification}

  Pullback along $X\rightarrow LX$ induces an equivalence of categories $\mathrm{Fib}\slice X\cong D\slice LX$.
  
\end{corollary}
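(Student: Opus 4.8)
The plan is to build the equivalence $\mathrm{Fib}\slice X \cong D\slice LX$ directly from the characterisation of fibrations provided by criterion \ref{locally-cartesian/criterion/cartesian-unit} of Proposition \ref{locally-cartesian/criterion}, realising it as a restriction of the standard adjunction between slices. Pullback along the unit $e_X:X\rightarrow LX$ gives a functor $e_X^*:C\slice LX\rightarrow C\slice X$, which has a right adjoint $(e_X)_*$ given by relative mapping objects (using that $D$ is reflective, or simply the slice adjunction). First I would observe that $e_X^*$ restricts to a functor $D\slice LX\rightarrow \mathrm{Fib}\slice X$: given $p:S\rightarrow LX$ with $S\in D$, the base change $e_X^*S = X\times_{LX}S\rightarrow X$ is a fibration because it is a base change of the map $S\rightarrow LX$ between local objects (which is a fibration by the Example following Definition \ref{bousfield/model/fibration}), and fibrations are stable under base change.

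Next I would produce the inverse functor, and this is where criterion \ref{locally-cartesian/criterion/cartesian-unit} does the real work. Given a fibration $f:Y\rightarrow X$, the criterion says precisely that the naturality square
\[
  \begin{tikzcd}
    Y \ar[r] \ar[d, "f"'] & LY \ar[d, "Lf"] \\
    X \ar[r, "e_X"] & LX
  \end{tikzcd}
\]
is Cartesian, i.e.\ $Y\cong X\times_{LX}LY$. Applying $L$ to $f$ produces $Lf:LY\rightarrow LX$, an object of $D\slice LX$, so I would define the candidate inverse by $f\mapsto Lf$. The Cartesian square is exactly the statement that $e_X^*(Lf)\cong f$ naturally in $f$, giving the counit isomorphism $e_X^*\circ L\cong \id$ on $\mathrm{Fib}\slice X$.

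For the other composite, I would start with $p:S\rightarrow LX$ in $D\slice LX$ and compute $L(e_X^*p)$. Here I expect to invoke the locally Cartesian hypothesis in the form \ref{locally-cartesian/criterion/homotopy-pullback} (that $L$ preserves pullbacks of fibrations): applying $L$ to the pullback square defining $X\times_{LX}S$ yields $L(X\times_{LX}S)\cong LX\times_{L(LX)}LS\cong LX\times_{LX}S\cong S$, using that $S$ and $LX$ are already local so their units are equivalences. This gives the unit isomorphism $L\circ e_X^*\cong\id$ on $D\slice LX$, and with the two isomorphisms together the restricted functors are mutually inverse.

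The main obstacle I anticipate is bookkeeping the naturality and the identifications of local objects under $L$ cleanly, rather than any deep difficulty: the substance is entirely packaged into criterion \ref{locally-cartesian/criterion/cartesian-unit} and the pullback-preservation criterion \ref{locally-cartesian/criterion/homotopy-pullback}, so the proof should amount to little more than citing Proposition \ref{locally-cartesian/criterion} and checking that the slice-level pullback/pushforward adjunction restricts to the two full subcategories in question. Since both sides are full subcategories of slice categories and the functors are restrictions of an adjoint pair, establishing that the unit and counit are isomorphisms on the nose suffices to conclude the equivalence.
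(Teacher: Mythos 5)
Your proposal is correct and is exactly the argument the paper intends: the corollary is stated without a written proof precisely because it follows from Proposition \ref{locally-cartesian/criterion} in the way you describe, with criterion \ref{locally-cartesian/criterion/cartesian-unit} supplying $e_X^*\circ L\cong\id$ on $\mathrm{Fib}\slice X$ and criterion \ref{locally-cartesian/criterion/homotopy-pullback} (or, more directly, criterion \ref{locally-cartesian/criterion/cartesian-over-local}) supplying $L\circ e_X^*\cong\id$ on $D\slice LX$. The only blemish is the parenthetical claim that $e_X^*$ has a right adjoint $(e_X)_*$ --- the adjunction actually relevant here has $e_X^*$ as the \emph{right} adjoint of $L$ on slices --- but this is never used, since you verify both composites directly.
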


\begin{lemma}[Five lemma in a topos] \label{five}

  Let 
  \[
    \begin{tikzcd}
      (X,x) \ar[r] \ar[d, equals] &  (Y,y) \ar[r] \ar[d, "\phi"] &  (Z,z) \ar[d, equals] \\
      (X,x) \ar[r] & (Y',y') \ar[r] &(Z,z)
    \end{tikzcd}
  \]
  be a map of fibre sequences in an $\infty$-topos $C$, where $Z$ is connected.
  Then $\phi$ is an isomorphism.
  
\end{lemma}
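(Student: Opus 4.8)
The plan is to reduce the statement to a descent argument along the basepoint $z\colon 1\to Z$, which is precisely where the hypothesis that $C$ is an $\infty$-topos enters. First I would unwind the data. By definition of fibre sequence the common fibre satisfies $X\cong 1\times_Z Y\cong 1\times_Z Y'$, the pullbacks being taken along $z$; and the vanishing of the two outer vertical maps in the diagram means exactly that $\phi\colon Y\to Y'$ is a morphism of $C\slice Z$ whose base change along $z$ is the identity, i.e.\ $z^*\phi=\id_X$. Thus the whole content of the lemma is that $z^*$ detects equivalences of objects lying over $Z$.

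The two inputs are then as follows. Because $(Z,z)$ is pointed and connected we have $\tau_{\le 0}Z\cong 1$, so $\tau_{\le 0}(z)$ is an isomorphism and in particular an epimorphism of $0$-truncated objects; by the standard criterion this means $z$ is an \emph{effective epimorphism}. The second, and essential, input is that in an $\infty$-topos pullback along an effective epimorphism is a conservative functor. I would deduce this from descent: writing the Čech nerve $\check C(z)_\bullet$ of $z$, effectivity gives $Z\cong\operatorname{colim}_{[n]\in\Delta^{\op}}\check C(z)_n$, and descent identifies $C\slice Z$ with the totalization $\lim_{[n]\in\Delta}C\slice{\check C(z)_n}$, under which $z^*$ is evaluation at $[0]$. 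Equivalences in such a limit are detected levelwise; and since $\check C(z)_0=1$, the descent datum underlying a morphism $g$ of $C\slice Z$ presents its value $g_n$ over $\check C(z)_n$ as the base change of $g_0=z^*g$ along the (unique) map $\check C(z)_n\to 1$. Hence if $z^*g=g_0$ is an equivalence then every $g_n$ is, so $g$ is an equivalence.

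Applying this to $\phi$ finishes the argument: $z^*\phi=\id_X$ is an equivalence, so $\phi$ is an equivalence over $Z$, and therefore an isomorphism. I expect the only real obstacle to be making the conservativity step fully precise—in particular checking that under the descent equivalence the higher simplicial levels of a morphism are genuinely base changes of its zeroth level, so that an equivalence at level $0$ propagates upward. The remaining steps (the effective-epimorphism criterion and the identification $z^*\phi=\id_X$) are routine once the two fibre sequences are unwound, and the connectedness hypothesis is used exactly, and only, to know that the single point $z$ already covers $Z$.
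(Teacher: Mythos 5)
Your proof is correct and takes essentially the same route as the paper's: both observe that connectedness makes $z\colon 1\to Z$ an effective epimorphism, invoke descent to identify $C\slice Z$ with the limit over $\Delta$ of the slices over the \v{C}ech nerve of $z$, and conclude that evaluation at level zero --- i.e.\ $z^*$ --- is conservative, which applied to $z^*\phi=\id_X$ finishes the argument. The paper handles the conservativity step by realising the limit as Cartesian sections of the associated Cartesian fibration, which is exactly the device that makes precise the point you flag (that the higher simplicial components of a morphism are base changes of its zeroth component).
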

\begin{proof}

  By \cite[Prop.~6.5.1.20]{HTT}, $z:*\rightarrow Z$ is an effective epimorphism,\footnote{Thanks are due to Marc Hoyois for pointing this out to me.} whence by descent theory $C\slice Z$ is the limit of the Cech nerve of $C\slice z$.
  Now, the map from a limit over the simplex category into its zeroth term is always conservative --- as can be seen, for example, by realising the limit as Cartesian sections of the associated Cartesian fibration --- so in particular pullback along $z$ is conservative. \qedhere

\end{proof}
  
\begin{proposition} \label{koszul/thick}
  
  Let $C$ be an $\infty$-topos, $D\subseteq C$ a locally Cartesian localisation.
  Then for any fibre sequence
  \[
    (X,x)\rightarrow (Y,y) \rightarrow (Z,z)
  \]
  of pointed objects of $C$ with $Z$ connected, $(X$, $Z\in D) \Rightarrow (Y\in D)$.
  
\end{proposition}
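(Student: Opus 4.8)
The plan is to realise the unit $Y\to LY$ as the middle map of a morphism of fibre sequences over the connected base $Z$, with isomorphisms on the fibre and on the base, and then to invoke the five lemma in a topos (Lemma \ref{five}). Since $C$ is an $\infty$-topos and $Z$ is connected, that lemma applies; this is where both hypotheses are used.

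First I would localise the total space and form the fibre of the localised projection. Because $Z\in D$ we have $LZ\cong Z$, so the projection $Y\to Z$ localises to a map $LY\to Z$, and naturality of the unit makes the square with $Y\to LY$ on one side and $\id_Z$ on the other commute. Let $F\defeq *\times_Z LY$ be the fibre of $LY\to Z$ over $z$. As $D$ is reflective it is closed under limits in $C$, and since $*,Z,LY\in D$ we get $F\in D$. Pulling the given fibre sequence back along the unit then produces a comparison map $\alpha\colon X\to F$ of fibres sitting over a morphism of fibre sequences
\[
  \begin{tikzcd}
    X \ar[r] \ar[d, "\alpha"] & Y \ar[r] \ar[d] & Z \ar[d, equals] \\
    F \ar[r] & LY \ar[r] & Z.
  \end{tikzcd}
\]

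The heart of the argument is to show $\alpha$ is an isomorphism, and here the locally Cartesian hypothesis enters. I would compute $F\times_{LY}Y$ by pasting pullbacks: since $F=*\times_Z LY$, associativity gives $F\times_{LY}Y\cong *\times_Z Y\cong X$ (using that $Y\xrightarrow{u} LY\to Z$ is just the original projection, by naturality of the unit), and under this identification the projection $F\times_{LY}Y\to F$ is exactly $\alpha$. Now condition \ref{locally-cartesian/criterion/cartesian-over-local} of Proposition \ref{locally-cartesian/criterion}, applied to the object $Y$ and the map $F\to LY$ with $F\in D$, says precisely that $F\times_{LY}Y\to F$ is an $L$-equivalence. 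Thus $\alpha$ is an $L$-equivalence between the local objects $X,F\in D$, and any such map is an isomorphism.

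With $\alpha$ invertible I can rewrite the bottom row as a fibre sequence $X\to LY\to Z$ carrying the identity on $X$ and on $Z$, so the diagram becomes a morphism of fibre sequences over the connected object $Z$ with isomorphisms at both ends. Lemma \ref{five} then forces the middle map $Y\to LY$ to be an isomorphism, i.e.\ $Y\in D$, as desired. I expect the only delicate point to be the pullback bookkeeping identifying $F\times_{LY}Y\cong X$ together with the verification that the resulting projection coincides with the fibre comparison $\alpha$; once that is in hand, the locally Cartesian property and the five lemma do the rest.
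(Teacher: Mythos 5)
Your proof is correct and fills in exactly the argument the paper leaves implicit: the paper's entire proof is the citation of the five lemma (Lemma \ref{five}), and your construction of $F \cong \ast\times_Z LY$, the use of condition \ref{locally-cartesian/criterion/cartesian-over-local} to see that the fibre comparison $\alpha\colon X\to F$ is an $L$-equivalence between local objects (hence an isomorphism), and the concluding application of the five lemma to the unit $Y\to LY$ are precisely the bookkeeping that citation presupposes. Same approach, just written out in full.
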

\begin{proof}

  By the five lemma \ref{five}. \qedhere

\end{proof}
\begin{theorem}[Classification of fibrations] \label{koszul/classification}

  Let $L:C\rightarrow D$ be a locally Cartesian localisation, and let $(X,x)$ be a pointed object of $C$.
  Suppose that one of the following conditions are satisfied:
  \begin{enumerate}
    \item
      $C$ is an $\infty$-topos and $LX$ is connected as an object of $C$. 
    \item
      $D$ is an $\infty$-topos and $LX$ is connected as an object of $D$.
  \end{enumerate}
  Then a morphism $f:Y\rightarrow X$ is an $L$-fibration if and only if the induced map $\ker(f) \rightarrow \ker(Lf)$ is an equivalence. 
  Moreover, Koszul dualty induces an equivalence of categories
  \[
    \mathrm{Fib}\slice X \cong \Mod_{\Omega_xLX}(D)
  \]
  that intertwines the fibre functor $x^*:C\slice X\rightarrow C$ with the underlying object functor $\mathrm{fgt}:\Mod_{\Omega_xLX}(D)\rightarrow D\subseteq C$.
  
\end{theorem}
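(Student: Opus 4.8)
The plan is to build the stated equivalence by composing the identification $\mathrm{Fib}\slice X\cong D\slice LX$ of Corollary~\ref{locally-cartesian/fibration/classification} with a delooping equivalence $D\slice LX\cong\Mod_{\Omega_x LX}(D)$, the latter available because $LX$ is connected. I would dispose of the fibrewise criterion first. One direction is immediate from condition~\ref{locally-cartesian/criterion/cartesian-unit} of Proposition~\ref{locally-cartesian/criterion}: if $f$ is a fibration the unit square is Cartesian, and restricting it along the point $x\colon *\to X$ (which covers the basepoint of $LX$) identifies $\ker(f)=x^{*}Y$ with $\ker(Lf)$. For the converse I would form the genuine fibration $Y'\defeq X\times_{LX}LY$ and the canonical comparison $c\colon Y\to Y'$ over $X$. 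Condition~\ref{locally-cartesian/criterion/cartesian-over-local} shows $c$ is an $L$-equivalence, and the hypothesis says precisely that $\ker(f)\to\ker(Lf)$, which is $c$ restricted to the fibre over $x$, is an equivalence. Viewing $c$ as a map of fibre sequences over the base that is the identity downstairs and an equivalence on fibres, the five lemma (Lemma~\ref{five}) upgrades it to an equivalence, so the unit square is Cartesian and $f$ is a fibration.

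For the classification proper I would manufacture $D\slice LX\cong\Mod_{\Omega_x LX}(D)$ by the same descent used in the proof of Lemma~\ref{five}. Connectedness of $LX$ makes the basepoint an effective epimorphism; its Cech nerve is the bar construction on the group $\Omega_x LX$, and descent exhibits the slice over $LX$ as the totalisation of that resolution, i.e.\ as $\Omega_x LX$-modules, with the fibre-over-the-basepoint functor playing the role of the (monadic) forgetful functor. In case~(2) this is run inside the topos $D$ directly. In case~(1) I would instead run it inside $C$, getting $C\slice LX\cong\Mod_{\Omega_x LX}(C)$, and then restrict to the full subcategory $D\slice LX$. The point is that an object $S\to LX$ lies in $D\slice LX$ exactly when its fibre lies in $D$: necessity holds because $D$ is closed under limits in $C$, and sufficiency is Proposition~\ref{koszul/thick} applied to the fibre sequence $\ker\to S\to LX$ over the connected base $LX$. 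Since $\Omega_x LX\in D$ and finite products agree in $D$ and $C$, this cuts $\Mod_{\Omega_x LX}(C)$ down to exactly $\Mod_{\Omega_x LX}(D)$.

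The intertwining then falls out by chasing functors: under $\mathrm{Fib}\slice X\cong D\slice LX$ the fibre functor $x^{*}$ coincides with the fibre-over-the-basepoint functor on $D\slice LX$ by the criterion just proved, and the latter is the forgetful functor of the monadic presentation, so $x^{*}$ is carried to $\mathrm{fgt}$. I expect the main obstacle to be the delooping/monadicity step itself --- checking that the descent presentation of $D\slice LX$ is genuinely monadic over $D$ with monad the free $\Omega_x LX$-action --- together with the case~(1) bookkeeping that pins the essential image down to modules internal to $D$. This is exactly where Proposition~\ref{koszul/thick} is indispensable: without it, a local total space is not detected by its (local) fibre, and the subcategory $D\slice LX$ would fail to match the modules over $D$. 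A secondary subtlety is that the backward half of the fibrewise criterion genuinely relies on the fibre functor being conservative, so the connectedness hypothesis --- which is precisely what makes Lemma~\ref{five} applicable --- cannot be dropped.
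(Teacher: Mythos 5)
Your classification argument is essentially the paper's own: the paper likewise composes Corollary~\ref{locally-cartesian/fibration/classification} with the delooping equivalence $D\slice LX\cong\Mod_{\Omega_xLX}(D)$, except that it obtains the latter by citing \cite[Thm.~3.1]{beardsley2021koszul} rather than re-running the Cech-nerve/monadicity argument you sketch, and its treatment of case~(1) is exactly your reduction: $\Mod_{\Omega_xLX}(D)\subseteq\Mod_{\Omega_xLX}(C)$ is the full subcategory of modules with local underlying object, and Proposition~\ref{koszul/thick} shows a map $S\rightarrow LX$ has local total space if and only if it has local fibre. So for that half the only difference is that you re-prove the cited input, which is fine.

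The genuine gap is in your backward direction of the fibrewise criterion. You apply Lemma~\ref{five} to the comparison $c:Y\rightarrow Y'=X\times_{LX}LY$ regarded as a map of fibre sequences over the base $X$; but Lemma~\ref{five} requires the \emph{base} of the fibre sequences to be connected and the ambient category to be an $\infty$-topos, whereas the hypotheses only make $LX$ connected (and in case~(2) do not even make $C$ a topos). This is not a repairable slip, because the implication you are trying to prove fails when $X$ is disconnected: take $C=\Sheaf(\Sm_k,\mathrm{Nis})$, $L=L_{\A^1}$, $X=\A^1$ pointed at $0$ (so $LX\cong *$ is connected and case~(1) holds), and $f$ the inclusion $\{0\}\hookrightarrow\A^1$. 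Then $\ker(f)\cong *\cong\ker(Lf)$ (both fibre products are points since $\A^1$ is $0$-truncated), yet $f$ is not an $L$-fibration: by criterion~\ref{locally-cartesian/criterion/cartesian-unit} that would force $*\cong \A^1\times_{*}*\cong\A^1$. Your closing remark correctly identifies conservativity of the fibre functor as the crux, but conservativity of pullback along $x:*\rightarrow X$ is governed by connectedness of $X$, not of $LX$, and localisation can connect a disconnected object. Under the additional hypothesis that $X$ itself is connected in an ambient topos $C$, your five-lemma argument does work. To be fair to you, the paper's one-line proof never addresses the $\ker(f)\rightarrow\ker(Lf)$ clause at all --- it only establishes the equivalence of categories --- so your attempt founders precisely where the paper is silent, and the counterexample above indicates that this clause of the theorem needs $X$, not merely $LX$, to be connected.
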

\begin{proof}

  By \cite[Thm.~3.1]{beardsley2021koszul} and Corollary \ref{locally-cartesian/fibration/classification}. 
  For the second case, the application is immediate.
  We reduce the first to the case of the ambient topos as follows:
  \begin{itemize}
    \item
      For a group-like monoid $A$ in $D$, $\Mod_A(D)\subseteq \Mod_A(C)$ is the full subcategory of modules whose underlying object belongs to $D$. 
      
    \item
      By Proposition \ref{koszul/thick}, the domain of a morphism $S\rightarrow LX$ belongs to $D$ if and only if its fibre $x^*S$ is in $D$. \qedhere
  
  \end{itemize}
  
\end{proof}

We end this section with a method to construct examples of fibrations.

\begin{para}[Fibrations from univalent families]

  By \cite[Thm.~3.12]{gepner2017univalence}, if $L:C\rightarrow D$ is a locally Cartesian localisation of an $\infty$-topos and $f:E\rightarrow S$ is a \emph{univalent family} in $D$, then there is an associated local class of maps $\mathcal{O}_f$ in $C$ classified by $f$; that is, for $g\in\mathcal{O}_f$ there is a unique Cartesian square
  \[
    \begin{tikzcd}
      X \ar[r] \ar[d, "g"] & E \ar[d, "f"] \\
      Y \ar[r] & S.
    \end{tikzcd}
  \]
  In particular, every element of this class is an $L$-fibration.
  
  For example, by \cite[\S6]{gepner2017univalence} this situation arises for objects of $C$ whose automorphism object belongs to $D$.

\end{para}

\begin{proposition}
  
  Let $L:C\rightarrow D$ be a locally Cartesian localisation of an $\infty$-topos, $F:C$ an object.
  Suppose that $B\underline{\Aut}(F)$ and $F/\underline{\Aut}(F)$ are in $D$.
  Then $F\in D$.
  More generally, any $F$-fibre bundle is an $L$-fibration.
  
\end{proposition}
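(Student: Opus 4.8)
The plan is to recognise both assertions as instances of the principle, recorded in the Example following Definition~\ref{bousfield/model/fibration}, that a morphism between local objects, and any base change of such a morphism, is an $L$-fibration. The geometric input is the \emph{universal $F$-fibre bundle}. Since $C$ is an $\infty$-topos, the automorphism group $\underline{\Aut}(F)$ deloops to a connected object $B\underline{\Aut}(F)$, and the Borel construction furnishes a fibre sequence
\[
  F \rightarrow F/\underline{\Aut}(F) \rightarrow B\underline{\Aut}(F)
\]
which, by the object classifier of $C$ (\cite[\S6]{gepner2017univalence}), is universal: every $F$-fibre bundle $g:E\rightarrow S$ fits into a Cartesian square
\[
  \begin{tikzcd}
    E \ar[r] \ar[d, "g"] & F/\underline{\Aut}(F) \ar[d] \\
    S \ar[r] & B\underline{\Aut}(F)
  \end{tikzcd}
\]
classified by a map $S\rightarrow B\underline{\Aut}(F)$.

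First I would dispatch the general statement about fibre bundles. By hypothesis both $F/\underline{\Aut}(F)$ and $B\underline{\Aut}(F)$ lie in $D$, so the universal bundle is a morphism between local objects, hence an $L$-fibration; every base change of it is then an $L$-fibration as well. As each $F$-fibre bundle is precisely such a base change, it is an $L$-fibration.

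The claim $F\in D$ is then the special case of the fibre over the basepoint $\ast\rightarrow B\underline{\Aut}(F)$. Concretely, $F\cong \ast\times_{B\underline{\Aut}(F)}F/\underline{\Aut}(F)$ is a pullback of three objects of $D$, and since $D\subseteq C$ is reflective, hence closed under limits, this pullback is local. (Equivalently, the projection $F\rightarrow\ast$ is a base change of the universal bundle, so it is an $L$-fibration; and an $L$-fibrant object of a locally Cartesian localisation is local by criterion~\ref{locally-cartesian/criterion/cartesian-unit}, since $L\ast\cong\ast$.)

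The one non-formal ingredient — and the step I expect to be the crux — is the classification of $F$-fibre bundles by $B\underline{\Aut}(F)$, i.e.~that every such bundle is a base change of the universal one. This is where the $\infty$-topos hypothesis genuinely enters, through the existence of $B\underline{\Aut}(F)$ as a delooping of a grouplike object and the object classifier identifying bundles with maps into it; once this identification is in hand, both assertions follow at once from the Example. Alternatively, one could route the argument through the preceding discussion of univalent families, observing that the hypotheses make $F/\underline{\Aut}(F)\rightarrow B\underline{\Aut}(F)$ a univalent family \emph{in $D$}, so that its associated local class $\mathcal{O}_f$ consists of $L$-fibrations; this is the same content, with the descent of univalence to $D$ taking the place of the obstacle.
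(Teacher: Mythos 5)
Your proposal is correct and is essentially the paper's intended argument: the proposition appears there without a separate proof, its justification being the immediately preceding paragraph on univalent families, and your route --- every $F$-fibre bundle is a base change of the universal bundle $F/\underline{\Aut}(F)\rightarrow B\underline{\Aut}(F)$, which by hypothesis is a map in $D$ and hence an $L$-fibration together with all its base changes, while $F\in D$ follows by taking the fibre over the (local) terminal object --- simply makes this explicit. Whether one phrases the classification step via univalence in $C$ together with the Example that maps between local objects and their base changes are $L$-fibrations, or via the cited Gepner--Kock theorem on univalent families in $D$ as the paper's paragraph does, is, as you observe yourself, the same content.
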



\section{Thomason model structure} \label{thomason/}

  A proof of existence of the left localisation model structure on the groupoid completion functor $|-|:\Cat\rightarrow\Space$ appears in \cite[Ap.~A]{mazelgee2015grothendieck}.
  The question of what are the fibrations is left unresolved.
  
  An argument based on Proposition \ref{locally-cartesian/criterion} addresses this gap, and is also substantially simpler.
  
\begin{proposition} \label{thomason/locally-cartesian}

  Groupoid completion is locally Cartesian.

\end{proposition}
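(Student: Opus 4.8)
The plan is to apply the recognition criterion of Corollary~\ref{locally-cartesian/generator/strong}. Groupoid completion $|-|:\Cat\rightarrow\Space$ is the nullification of $\Delta^1$ --- a functor is a local object precisely when it is a space, equivalently when $\Delta^1$ acts on it as a contractible interval --- and in the notation of Definition~\ref{nullification} it is generated as a Bousfield localisation by the product projections $\pi_Y:\Delta^1\times Y\rightarrow Y$ for $Y:\Cat$. By Corollary~\ref{locally-cartesian/generator/strong} it suffices to check two things: first, that every morphism between spaces is exponentiable in $\Cat$; and second, that the generating set $S=\{\pi_Y:\Delta^1\times Y\rightarrow Y\}$ is stable under base change.

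For the stability-under-base-change condition, I would observe that $S$ is by construction stable under pullback in an essentially formal way: given any $Z\rightarrow Y$, the pullback of $\pi_Y:\Delta^1\times Y\rightarrow Y$ along $Z\rightarrow Y$ is again the projection $\Delta^1\times Z\rightarrow Z$, since $\Delta^1\times Y\times_Y Z\cong \Delta^1\times Z$. Thus $S$ is literally closed under base change, and this is exactly the hypothesis Corollary~\ref{locally-cartesian/generator/strong} asks for. (This is the general reason, recorded in Definition~\ref{nullification}, why nullifications are well-behaved; here it specialises to the single generator $\Delta^1$.)

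The substantive step is exponentiability: I must verify that any map $p:A\rightarrow B$ between spaces (i.e.~between groupoid-complete, hence local, objects) is exponentiable in $\Cat$, meaning pullback $\Cat\slice B\rightarrow\Cat\slice A$ preserves colimits. The key observation is that a functor whose target is a \emph{space} (a Kan complex / $\infty$-groupoid) is automatically a \emph{conservative} Cartesian fibration in a strong sense --- more precisely, any functor $A\rightarrow B$ with $B$ an $\infty$-groupoid is both a left and a right fibration, and such maps are exponentiable because pullback along them is computed fibrewise and commutes with colimits. Concretely, I expect to invoke that a map with $\infty$-groupoid codomain is a \emph{local system}, and that base change along left (equivalently right) fibrations preserves colimits in $\Cat$; alternatively one can argue directly that $B\cong|B|$ being a colimit of points, the slice $\Cat\slice B$ is a limit of copies of $\Cat\slice A$ and pullback is computed levelwise.

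The main obstacle I anticipate is precisely this exponentiability verification: while it is folklore that maps to groupoids are exponentiable, pinning down the cleanest justification --- whether via the theory of exponentiable fibrations of Ayala--Francis/Lurie, via the explicit left-right-fibration description of local systems that the introduction promises will describe the fibrations, or via a universality-of-colimits argument in the (non-topos!) category $\Cat$ --- requires care, since $\Cat$ is not an $\infty$-topos and so colimits are not universal in general. I would therefore lean on the fact that exponentiability need only be checked for the specific maps arising as base points or projections from groupoids, where the fibration structure is available. Once both hypotheses of Corollary~\ref{locally-cartesian/generator/strong} are in place, the conclusion that $|-|$ generates a locally Cartesian localisation is immediate.
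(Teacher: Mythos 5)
Your proposal is correct and takes essentially the same route as the paper: identify groupoid completion as the nullification of $\Delta^1$ (generated by the projections $\Delta^1\times Y\to Y$, which are manifestly stable under base change), verify that morphisms between spaces are exponentiable in $\Cat$, and conclude by Corollary~\ref{locally-cartesian/generator/strong}; the paper disposes of the exponentiability step by citing Ayala--Francis (Cor.~1.14), which is one of the routes you name. One correction to your ``key observation'': it is false that any functor $A\to B$ with $B$ an $\infty$-groupoid is (even equivalent to) a left and right fibration --- for instance $\Delta^1\to\Delta^0$ is not, since left fibrations over a point have $\infty$-groupoid total space; for general $A$ such a functor is only a Cartesian/coCartesian fibration. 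This does not damage your argument, because Proposition~\ref{locally-cartesian/generator} asks for exponentiability only of morphisms of $D$, i.e.\ maps whose source \emph{and} target are spaces, and those are equivalent to Kan fibrations, hence to left (and right) fibrations, hence exponentiable --- exactly the instance you correctly said you would lean on.
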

\begin{proof}

  Groupoid completion is generated as a localisation by the maps $\Delta^1 \times I\rightarrow I$ (or even just with $I=\Delta^0$).
  In particular, it is a nullification functor in the sense of Definition \ref{nullification}.
  Since all maps between spaces are exponentiable fibrations \cite[Cor.~1.14]{ayala2020fibrations}, groupoid completion is locally Cartesian by Corollary \ref{locally-cartesian/generator/strong}.
  \qedhere

\end{proof}

\begin{definition}[Local system] \label{local-system/definition}

  A functor $p:E\rightarrow C$ is said to be a \emph{local system} under the two equivalent conditions:
  \begin{itemize}
    \item
      $p$ is (equivalent to) both a left and a right fibration.
      
    \item
      $p$ is a right fibration, and the associated presheaf descends to $|C|$.
      
  \end{itemize}
\end{definition}

\begin{corollary} \label{thomason/fibration}

  The fibrations in the Thomason model structure on $\Cat_\infty$ are precisely the local systems.
  
\end{corollary}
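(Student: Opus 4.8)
The plan is to combine the Cartesian-unit criterion for $|{-}|$-fibrations (Proposition~\ref{locally-cartesian/criterion}, condition~\ref{locally-cartesian/criterion/cartesian-unit}, which applies by Proposition~\ref{thomason/locally-cartesian}) with the straightening/unstraightening dictionary for fibrations over a category. Writing $q:C\rightarrow|C|$ for the unit, the criterion says that a functor $p:E\rightarrow C$ is a fibration precisely when the naturality square
\[
  \begin{tikzcd}
    E \ar[r] \ar[d, "p"] & |E| \ar[d] \\
    C \ar[r, "q"] & |C|
  \end{tikzcd}
\]
is Cartesian; equivalently, by Corollary~\ref{locally-cartesian/fibration/classification}, when $p$ is equivalent to the pullback $q^*s$ of some map of spaces $s:S\rightarrow|C|$ (and then $S\simeq|E|$). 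So the whole task reduces to showing that this class of maps --- base changes along $q$ of maps between spaces --- coincides with the class of local systems.

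For the direction ``fibration $\Rightarrow$ local system,'' I would first observe that any map of spaces is, up to equivalence, both a left and a right fibration, since over an $\infty$-groupoid both notions reduce to the underlying map being a Kan fibration. As left fibrations and right fibrations are each stable under base change, the pullback $q^*s:E\rightarrow C$ of a map of spaces is again both a left and a right fibration, that is, a local system in the sense of the first clause of Definition~\ref{local-system/definition}.

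For the converse I would use the second clause of Definition~\ref{local-system/definition}: a local system $p:E\rightarrow C$ is a right fibration whose classifying presheaf $F:C^{\op}\rightarrow\Space$ descends along $q$, i.e.~factors as $F\simeq \bar F\circ q^{\op}$ for a presheaf $\bar F$ on $|C|$ (a space, so $|C|^{\op}\simeq|C|$). Since unstraightening is compatible with base change, unstraightening $F$ recovers $E$ as the pullback $q^*(\int\bar F\rightarrow|C|)$ of the total space of $\bar F$, which is a map of spaces; and by condition~\ref{locally-cartesian/criterion/cartesian-over-local} its total space is identified with $|E|$. Hence the displayed square is Cartesian and $p$ is a fibration.

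The main obstacle --- really the only nonformal input --- is the identification underlying both directions: that a right fibration over $C$ is also a left fibration exactly when its classifying functor inverts every morphism of $C$, equivalently factors through the localisation $q:C\rightarrow|C|$. This is the content of the asserted equivalence of the two clauses of Definition~\ref{local-system/definition}, and is where I would lean on standard properties of (un)straightening together with the universal property of groupoid completion. Once it is in hand, the matching with the Cartesian-unit criterion is immediate from the locally Cartesian structure.
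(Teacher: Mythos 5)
Your proposal is correct and follows essentially the same route as the paper: both directions hinge on the Cartesian-unit criterion (Proposition~\ref{locally-cartesian/criterion}\ref{locally-cartesian/criterion/cartesian-unit}) together with the compatibility of (un)straightening with base change, with the converse running through the descended classifying presheaf. The only cosmetic difference is how the total space of the unstraightened descended presheaf is identified --- you use condition~\ref{locally-cartesian/criterion/cartesian-over-local} to see it is $|E|$, while the paper identifies it as the groupoid completion via the left Kan extension property of the descended presheaf --- but this is the same argument in substance.
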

\begin{proof}

  By part (\ref{locally-cartesian/criterion/cartesian-unit}) of Proposition \ref{locally-cartesian/criterion}, a functor $f:C\rightarrow D$ is a Thomason fibration if and only if it is a pullback of $|f|:|C|\rightarrow |D|$.
  In particular, Thomason fibrations are local systems.
  Conversely, suppose $f$ is a local system; then in particular it is (equivalent to) a right fibration whose classifying presheaf $F:\P(D)$ descends to a presheaf $F'$ on $|D|$.
  As $e:D\rightarrow |D|$ is a localisation, $F'$ is a left Kan extension of $F$ along $e$.
  It follows that $\int_{|D|}F' = |C|$.
  Finally, by compatibility of Grothendieck integral with base change, 
  $C\cong \int_{|D|}F'\times_{|D|}D$ and we are done.
  \qedhere

\end{proof}

\begin{corollary}[Classification of Thomason fibrations]

  Let $C$ be a connected category, $c:C$. The category of Thomason fibrations over $C$ is equivalent to the category of $\Omega_c|C|$-module spaces.
  
\end{corollary}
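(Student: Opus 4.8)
The plan is to obtain this as a direct specialisation of the Koszul duality classification of Theorem \ref{koszul/classification} to the groupoid completion functor. Recall that $|-|:\Cat\rightarrow\Space$ is locally Cartesian by Proposition \ref{thomason/locally-cartesian}. Its source $\Cat$ is not an $\infty$-topos, but its target $\Space$ is; we are therefore in case (2) of Theorem \ref{koszul/classification}, which we apply with $D=\Space$ and with the pointed object taken to be $(C,c)$ for our given connected category $C$ and chosen point $c:C$. So the whole content of the corollary is packaged into verifying that the hypotheses of that theorem are met and then unwinding its conclusion.

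The one hypothesis that needs checking is that $LX=|C|$ is connected as an object of $\Space$. This is either the definition of connectedness of $C$ or follows immediately from it: groupoid completion induces an isomorphism on sets of connected components (as $|-|$ localises at all morphisms, which only collapses the zigzag equivalence relation already computing $\pi_0$), so $\pi_0|C|\cong\pi_0 C=*$. Hence $|C|$ is connected, and case (2) of Theorem \ref{koszul/classification} applies verbatim.

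Given this, Theorem \ref{koszul/classification} produces an equivalence $\Fib\slice C\cong\Mod_{\Omega_c|C|}(\Space)$ intertwining the fibre functor $c^*$ with the forgetful functor to $\Space$. It remains only to match this against the two sides of the asserted statement: Corollary \ref{thomason/fibration} identifies $\Fib\slice C$, the category of $L$-fibrations over $C$, with the Thomason fibrations (the local systems); and $\Mod_{\Omega_c|C|}(\Space)$ is, by definition, the category of module spaces over the grouplike monoid $\Omega_c|C|$. This is precisely the desired equivalence.

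I expect no genuine obstacle here: the corollary is essentially a restatement of the general classification theorem in the groupoid completion setting. The only point requiring (minor) care is the connectedness of $|C|$, which is what licenses the Koszul duality input \cite{beardsley2021koszul} underlying Theorem \ref{koszul/classification}; everything else is bookkeeping, in particular recognising that modules in $\Space=D$ over $\Omega_c|C|$ are exactly $\Omega_c|C|$-module spaces.
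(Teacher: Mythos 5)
Your proposal is correct and matches the paper's (implicit) derivation: the corollary is exactly the specialisation of Theorem \ref{koszul/classification}, case (2) with $D=\Space$ an $\infty$-topos, using Proposition \ref{thomason/locally-cartesian} for local Cartesianness and the fact that $\pi_0 C\cong\pi_0|C|$ to verify connectedness of $|C|$. The paper states this as an immediate corollary without writing out these steps, and your bookkeeping fills them in faithfully.
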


\begin{remark}[Generating sets] \label{thomason/generator}

  Generating as a Bousfield localisation, or even as a localisation (i.e.~without closing under colimits) does not imply generating as a set of trivial cofibrations.
  For example, groupoid completion is generated as a localisation by the following sets of maps:
  \begin{itemize}
    \item
      Cofinal (resp.~coinitial) functors. The right orthogonal to this set comprises the right (resp.~left) fibrations.
      Note that pullback of right/left fibrations commutes with groupoid completion.     
     
    \item 
      The single map $\Delta^1\rightarrow\Delta^0$. Fibrations are functors whose fibres are groupoids.
      
  \end{itemize}
  In both cases, the fibrant objects are spaces, but not all weak equivalences are trivial cofibrations.
  It is natural to ask if the resulting factorisation systems lead to a (non-right-proper) model structure on $\Cat$.
  However, since it seems to be difficult to understand the accompanying class of cofibrations and trivial fibrations, the model category axioms are hard to check.
  
\end{remark}

\begin{remark}[Thomason structures on diagrams] \label{thomason/diagram}

  While Theorem \ref{bousfield/model/exists} does give us left localisation model structures on the parametrised version
  \[
    \colim:\Cat\slice C \rightarrow \P(C)
  \]
  of the groupoid completion functor, is is easy to see that these localisations are \emph{not} locally Cartesian. For an example, take the pullback
  \[
    \begin{tikzcd}
      \emptyset \ar[r] \ar[d] & \Delta^0 \ar[d, "0"] \\
      \Delta^0 \ar[r, "1"] & \Delta^1
    \end{tikzcd}
  \]
  in the category $\Cat\slice\Delta^1$.

\end{remark}

\section{Motivic spaces} \label{motive/}

Morel-Voevodsky $\mathbb{A}^1$-localisation $L_{\A^1}:\Sheaf(\Sm_k,\mathrm{Nis})\rightarrow \mathcal{H}(k)$ of the $\infty$-topos of Nisnevich sheaves is a nullification functor, hence locally Cartesian.
The class of $L_{\A^1}$-fibrations is closely related to the right orthogonal to the set of $\A^1$-local Nisnevich weak equivalences of simplicial presheaves, which have appeared in the literature under the name $\A^1$-\emph{cover} \cite{asok2009a1}.

\begin{lemma} \label{simplicial-model/lifting-problem/model}

  Let $C$ be a simplicial model category, and let
  \[
    \begin{tikzcd}
      U \ar[r] \ar[d, "g"] & X \ar[d, "f"] \\
      V \ar[r] & Y
    \end{tikzcd}
  \]
  be a commutative square in the associated $\infty$-category $C[W^{-1}]$.
  Then there exists a commutative square
  \[
    \begin{tikzcd}
      \tilde U \ar[r] \ar[d, "\tilde g"] & \tilde X \ar[d, "\tilde f"] \\
      \tilde V \ar[r] & \tilde Y
    \end{tikzcd}
  \]
  in $C$ with all objects bifibrant, $\tilde f$ a fibration, and $\tilde g$ a cofibration.
  
\end{lemma}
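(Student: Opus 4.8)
The plan is to reduce the statement to a strictification result and then run two factorisations inside $C$. A commutative square in $C[W^{-1}]$ is, by definition, an object of the functor $\infty$-category $\Fun(\Delta^1\times\Delta^1, C[W^{-1}])$. Since $C$ is a simplicial model category, this functor $\infty$-category is presented by diagrams in $C$: either via the projective model structure on $C^{\Delta^1\times\Delta^1}$ (when $C$ is combinatorial), or, with no extra hypotheses, by rigidifying homotopy coherent diagrams over the poset $\Delta^1\times\Delta^1$ valued in the fibrant simplicial category of bifibrant objects. Either way, the given square is represented by a strictly commutative, pointwise-bifibrant square
\[
\begin{tikzcd}
u \ar[r, "a"] \ar[d, "b"'] & x \ar[d, "c"] \\ v \ar[r, "d"'] & y
\end{tikzcd}
\]
in $C$ whose four vertices are bifibrant and which maps to the original square under the localisation $C\to C[W^{-1}]$.

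Second, I would correct the right-hand vertical to a fibration. Factor $c:x\to y$ as a trivial cofibration $i:x\to\hat x$ followed by a fibration $p:\hat x\to y$. The object $\hat x$ is again bifibrant ($\hat x$ is cofibrant because $x$ is and $i$ is a cofibration, and fibrant because $p$ is a fibration with fibrant target $y$), and replacing the top map $a$ by $ia$ and the right map $c$ by $p$ yields a strictly commutative square, since $p\cdot ia = ca = db$. As $i$ is a weak equivalence, this square still represents the original one in $C[W^{-1}]$.

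Third, and independently, I would correct the left-hand vertical to a cofibration. Factor $b:u\to v$ as a cofibration $j:u\to\tilde v$ followed by a trivial fibration $q:\tilde v\to v$; again $\tilde v$ is bifibrant, and replacing the bottom map $d$ by $dq$ and the left map $b$ by $j$ gives a commutative square, as $p\cdot ia = ca = db = d\cdot qj = dq\cdot j$, while $q$ is a weak equivalence so nothing changes in $C[W^{-1}]$. Since this step touches only the $v$-vertex and the bottom map, whereas the previous step touched only the $x$-vertex and the top and right maps (the shared vertices $u,y$ being untouched), the two corrections are compatible, and the resulting square
\[
\begin{tikzcd}
u \ar[r, "ia"] \ar[d, "j"'] & \hat x \ar[d, "p"] \\ \tilde v \ar[r, "dq"'] & y
\end{tikzcd}
\]
has bifibrant vertices, right vertical $\tilde f=p$ a fibration, and left vertical $\tilde g=j$ a cofibration, proving the claim.

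The only non-formal input is the first step: representing an $\infty$-categorical square by a strict, pointwise-bifibrant diagram in $C$. This is where the simplicial enrichment is essential, and I expect it to be the main obstacle to state cleanly; once granted, the remaining steps are routine applications of the factorisation axioms, and the order of the two factorisations is immaterial because they modify disjoint parts of the square. An alternative to the cofibration step is to take the strictification to be \emph{Reedy} cofibrant from the outset, which forces the latching map $u\to v$ — that is, the left vertical — to be a cofibration automatically, leaving only the fibration factorisation to perform.
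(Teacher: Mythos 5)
Your proof is correct, and it follows the same two-step strategy as the paper: strictify the square via the model structure on the diagram category, then repair the verticals using the factorisation axioms. The only substantive difference is which model structure does the strictifying. The paper applies \cite[Prop.~4.2.4.4]{HTT} with the \emph{injective} structure on $C^{(\Delta^1)^2}$; since the square poset is an inverse category, injective fibrancy coincides with Reedy fibrancy, so the matching-map conditions already force $\tilde f$ (in fact every edge of the strictified square) to be a fibration, and the paper then needs only your third step --- factoring the left vertical as a cofibration followed by a trivial fibration and replacing $\tilde V$. Your main route extracts only pointwise bifibrancy from the strictification and so pays for it with the extra (trivial cofibration, fibration) factorisation of the right vertical; that step is sound, and your observation that the two corrections modify disjoint vertices is exactly what makes them compose. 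Your closing remark about a Reedy-cofibrant strictification is precisely the dual of the paper's trick: projective cofibrancy over the direct category $(\Delta^1)^2$ forces the latching map $u\to v$ to be a cofibration, just as injective fibrancy forces $\tilde f$ to be a fibration. One point in your favour: \cite[Prop.~4.2.4.4]{HTT} requires $C$ to be combinatorial, an hypothesis absent from the lemma as stated; the paper's own proof has the same silent gap, whereas your fallback via rectification of homotopy coherent squares (which for a single square can be done by hand with the mapping path object $X\times_Y Y^{\Delta^1}$, using only the simplicial cotensor) is what one would need to cover the general case.
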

\begin{proof}

  By \cite[Prop.~4.2.4.4]{HTT}, the square admits a lift to an injectively bifibrant $C$-valued presheaf on $(\Delta^1)^2$; in particular, this lift has all four objects bifibrant and $\tilde f$ a fibration.
  Now factor $\tilde g$ as a cofibration followed by a trivial fibration $\tilde U\rightarrow\tilde{\tilde V} \rightarrow\tilde V$, and replace $\tilde V$ with $\tilde{\tilde V}$.
  \qedhere

\end{proof}

\begin{lemma} \label{simplicial-model/lifting-problem/solution}

  Given a square in $C$ as in Lemma \ref{simplicial-model/lifting-problem/model}, we have:
  \[
    \Map_{\tilde U/C/\tilde Y}(\tilde V\times\Delta^\bullet,\tilde X) \cong \Map_{U/C[W^{-1}]/Y}(V,X)
  \]
  i.e.~the simplicial function complex models the $\infty$-groupoid of lifts.
  
\end{lemma}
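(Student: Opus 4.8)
The plan is to realise both sides as homotopy fibres of one and the same map of mapping spaces, and then to use the simplicial enrichment axiom to identify the \emph{strict} fibre computed in $C$ with the \emph{derived} fibre computed in $C[W^{-1}]$. First I would recall the standard fact that, because $\tilde V$ is cofibrant and $\tilde X$ is fibrant, the simplicial function complex $\Map_C(\tilde V,\tilde X)$ is a Kan complex modelling the mapping space $\Map_{C[W^{-1}]}(V,X)$ of the underlying $\infty$-category, and likewise for the other pairs of bifibrant objects appearing below. This comparison is natural in both variables, hence compatible with the restriction map $\tilde g^*$ and the corestriction map $\tilde f_*$.

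Next I would write the left-hand side as a strict fibre. By definition of the sandwich category, $\Map_{\tilde U/C/\tilde Y}(\tilde V\times\Delta^\bullet,\tilde X)$ is the simplicial set whose $n$-simplices are maps $\tilde V\times\Delta^n\to\tilde X$ that agree with the given map out of $\tilde U$ and lie over the given map to $\tilde Y$. Equivalently, it is the strict fibre, over the vertex determined by the square, of the map of function complexes
\[
  \Map_C(\tilde V,\tilde X)\xrightarrow{(\tilde g^*,\tilde f_*)}\Map_C(\tilde U,\tilde X)\times_{\Map_C(\tilde U,\tilde Y)}\Map_C(\tilde V,\tilde Y).
\]

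The key step is that this map is a Kan fibration. This is precisely the enrichment (pushout-product) axiom of a simplicial model category, applied to the cofibration $\tilde g:\tilde U\to\tilde V$ and the fibration $\tilde f:\tilde X\to\tilde Y$; this is exactly where the conclusions of Lemma \ref{simplicial-model/lifting-problem/model} --- all four objects bifibrant, $\tilde g$ a cofibration, $\tilde f$ a fibration --- get used. Since the map is a Kan fibration, its strict fibre computes its homotopy fibre. On the $\infty$-categorical side, the sandwich mapping space $\Map_{U/C[W^{-1}]/Y}(V,X)$ is by construction the homotopy fibre, over the vertex determined by the square, of the corresponding map
\[
  \Map_{C[W^{-1}]}(V,X)\to\Map_{C[W^{-1}]}(U,X)\times_{\Map_{C[W^{-1}]}(U,Y)}\Map_{C[W^{-1}]}(V,Y).
\]
Transporting along the natural comparison equivalences of the first step matches the two maps and hence identifies the two homotopy fibres, yielding the claimed equivalence.

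I expect the principal obstacle to be bookkeeping rather than anything conceptual: one must check that the comparison between simplicial function complexes and $\infty$-categorical mapping spaces is natural enough to be compatible with \emph{both} structure maps $\tilde g^*$ and $\tilde f_*$ at once, so that the induced maps on targets agree and the homotopy fibres are identified over the correct basepoint. Some care is also needed to confirm that $\tilde V\times\Delta^\bullet$, equipped with its induced map from $\tilde U$ and its map to $\tilde Y$, genuinely presents the function complex of the sandwich category --- that is, that the tensoring is computed as expected in the under/over category $\tilde U/C/\tilde Y$.
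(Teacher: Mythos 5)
Your proposal is correct and is essentially the paper's own argument: the paper's proof consists of the single observation that the simplicial model category lifting (pushout-product) axiom, applied to the cofibration $\tilde g$ and fibration $\tilde f$ between bifibrant objects, makes the strict fibre product defining $\Map_{\tilde U/C/\tilde Y}(\tilde V,\tilde X)$ a homotopy fibre product, which is exactly your key step. Your write-up just makes explicit the bookkeeping (Kan fibration, identification of basepoints, comparison of function complexes with $\infty$-categorical mapping spaces) that the paper leaves implicit.
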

\begin{proof}

  The simplicial model category lifting axiom entails that the fibre product defining $\Map_{\tilde U/C/\tilde Y}(\tilde V,\tilde X)$ is a homotopy fibre product.
  \qedhere
  
\end{proof}

\begin{proposition}

  Let $f:X\rightarrow Y$ be a map between Nisnevich-fibrant simplicial presheaves.
  If $f$ is an $\A^1$-cover, then the induced map of Nisnevich $\infty$-sheaves is an $L_{\A^1}$-fibration.
  If $X$ and $Y$ are sheaves of sets --- for example, schemes --- the converse holds.
  
\end{proposition}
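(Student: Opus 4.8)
The plan is to run both implications through the two lifting lemmas \ref{simplicial-model/lifting-problem/model} and \ref{simplicial-model/lifting-problem/solution}, which reduce an $\infty$-categorical lifting problem in $\Sheaf_\infty(\Sm_k,\mathrm{Nis})=C[W^{-1}]$ (for $C$ the injective, Nisnevich-local simplicial model category of simplicial presheaves and $W$ the Nisnevich weak equivalences) to a strict simplicial mapping complex computed in $C$. Being an $\A^1$-cover is the \emph{strict} unique right lifting property against monomorphisms that are $\A^1$-local Nisnevich weak equivalences (the $\A^1$-local trivial cofibrations); being an $L_{\A^1}$-fibration is, by Proposition \ref{bousfield/fibration/orthogonality} together with criterion \ref{locally-cartesian/criterion/cartesian-unit}, the \emph{homotopical} right orthogonality against every $L_{\A^1}$-equivalence. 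The whole statement amounts to comparing these, and the reverse implication will hinge on the target being $0$-truncated so that ``homotopically unique'' collapses to ``strictly unique.''

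For the forward implication, first note that an $\A^1$-cover $f\colon X\to Y$ is in particular a Nisnevich fibration: it has unique lifting against all $\A^1$-local trivial cofibrations, and the Nisnevich trivial cofibrations form a subclass. Thus $X,Y$ are Nisnevich-bifibrant and Lemma \ref{simplicial-model/lifting-problem/solution} applies with $\tilde f=f$. Given any $\infty$-categorical square whose left leg is an $L_{\A^1}$-equivalence, I would model it by a monomorphism $\tilde g\colon\tilde U\to\tilde V$ that is an $\A^1$-local weak equivalence. By the pushout--product axiom, each pushout--product of $\tilde g$ with a boundary inclusion $\partial\Delta^n\hookrightarrow\Delta^n$ is again an $\A^1$-local trivial cofibration, so the unique lifting property of $f$ against these says exactly that the mapping complex $\Map_{\tilde U/C/Y}(\tilde V\times\Delta^\bullet,X)$ has unique right lifting against every $\partial\Delta^n\hookrightarrow\Delta^n$, hence is contractible. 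By Lemma \ref{simplicial-model/lifting-problem/solution} this complex models the $\infty$-categorical space of lifts, which is therefore contractible; so $f$ is right orthogonal to every $L_{\A^1}$-equivalence, i.e.\ an $L_{\A^1}$-fibration.

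For the converse, let $X,Y$ be sheaves of sets and $f$ an $L_{\A^1}$-fibration. A map between discrete Nisnevich sheaves is again a Nisnevich fibration: a lift against a Nisnevich trivial cofibration $A\hookrightarrow B$ exists uniquely because a map into a discrete object factors through $\pi_0^{\mathrm{Nis}}$, which $A\hookrightarrow B$ leaves invariant. So Lemma \ref{simplicial-model/lifting-problem/solution} again applies with $\tilde f=f$, and for any $\A^1$-local trivial cofibration $\tilde g$ the complex $\Map_{\tilde U/C/Y}(\tilde V\times\Delta^\bullet,X)$ models the $\infty$-categorical space of lifts, which is contractible since $f$ is an $L_{\A^1}$-fibration. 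The decisive point is that, $X$ being discrete, every map $\tilde V\times\Delta^n\to X$ factors through $\pi_0(\tilde V\times\Delta^n)=\pi_0\tilde V$, so this mapping complex is constant in the simplicial direction, hence discrete; a discrete contractible complex is a single point, which is precisely a unique strict lift. Thus $f$ is right orthogonal to $\A^1$-local trivial cofibrations, i.e.\ an $\A^1$-cover.

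The main obstacle is the gap between homotopical and strict uniqueness of lifts, together with the attendant bookkeeping of which fibrancy one has at hand. The lifting lemmas only see the homotopy space of lifts, so in the forward direction I must manufacture strict contractibility of the mapping complex from the strict lifting property of the cover via the pushout--product axiom, and in the converse I must extract a genuine strict lift from a contractible homotopy space of lifts. It is exactly this last extraction that fails for a general target and succeeds for a sheaf of sets: discreteness is what forces the contractible complex to be an honest point, and this is the structural reason the converse must be restricted to sheaves of sets.
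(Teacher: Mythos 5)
Your proof is correct and follows essentially the same route as the paper: both directions run through Lemmas \ref{simplicial-model/lifting-problem/model} and \ref{simplicial-model/lifting-problem/solution}, with strict unique lifting forcing contractibility of the simplicial complex of solutions in the forward direction, and discreteness of $X$ collapsing a contractible complex to an honest point in the converse. The only differences are cosmetic: you get contractibility via pushout--products of $\tilde g$ with $\partial\Delta^n\hookrightarrow\Delta^n$ where the paper argues levelwise with $\Delta^n\otimes g$, and you explicitly verify (what the paper leaves implicit) that $f$ is a Nisnevich fibration in both directions, so that Lemma \ref{simplicial-model/lifting-problem/solution} really does apply with $\tilde f=f$.
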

\begin{proof}

  Suppose that $f:X\rightarrow Y$ is an $\A^1$-cover of simplicial presheaves, and let 
  \[
    \begin{tikzcd}
      U \ar[r] \ar[d, "g"] & X \ar[d, "f"] \\
      V \ar[r] & Y
    \end{tikzcd}
  \]
  be a commutative square in $\Sheaf_\infty(\Sm, \mathrm{Nis})$, where $g$ is an $\A^1$-equivalence.
  By Lemma \ref{simplicial-model/lifting-problem/model}, there exists a lift of this square to $s\mathrm{PSh}(\Sm)$, extending $f$, in which $U$ and $V$ are Nisnevich fibrant and $g$ is monic.
  Since $f$ is an $\A^1$-cover, the extension problem has a unique solution.
  This also applies to $\Delta^n\otimes g$ for all $n$, so the simplicial set of solutions to this lifting problem is a point.
  By Lemma \ref{simplicial-model/lifting-problem/solution}, $\Map_{U\slice \mathcal{H}(k)\slice Y}(V,X)$ is contractible.
  
  Conversely, suppose that $f$ is an $L_{\A^1}$-fibration and that $X$ and $Y$ are simplicially constant.
  Then $\underline{\mathrm{Hom}}_{U/-/Y}(V,X)$ is simplicially constant and contractible, hence a point.
  \qedhere
  
\end{proof}

\begin{example}[Torsors]

  Some classes of examples of fibrations can be constructed from univalent families, some of which are considered in \cite[par.~6.6+]{gepner2017univalence}.
  For instance: fibre bundles whose fibre has no rational curves and finite automorphism group, and $\mathbb{G}_m$-torsors are all fibrations for $\mathbb{A}^1$-localisation.
  
  The latter class includes Cox quotients $T\rightarrow \A^n\setminus L \rightarrow X$ presenting smooth toric varieties.
  The localised fibre sequence therefore yields a long exact sequence
  \[
    \cdots \rightarrow \pi_{n+1}^{\A^1}(X) \rightarrow \pi_n^{\A^1}(T) \rightarrow \pi_n^{\A^1}(\A^n\setminus L) 
    \rightarrow \pi_n^{\A^1}(X) \rightarrow \pi_{n-1}^{\A^1}(T) \rightarrow\cdots
  \]
  which can be used to compute $\A^1$-homotopy groups of $X$ \cite{asok2009a1}.

\end{example}

\begin{example}[Higher torsors]

  By \cite[Thm.~7.2.2.26]{HTT}, if $A$ is a strictly $\A^1$-invariant Nisnevich sheaf of (discrete) Abelian groups, then any $A$-banded $n$-gerbe in $\Sheaf(\Sm,\mathrm{Nis})$ is an $\A^1$-fibration.
  In particular, this apples to $A=\G_m$.
  I leave it to the reader to investigate the implications of, say, the long exact sequence in $\A^1$-homotopy groups, for his favourite $\G_m$ $n$-gerbe.

\end{example}

\begin{example}[Fibrations from univalent families]

  Let $f:X\rightarrow Y$ be a flat morphism whose geometric fibres are $\A^1$-local.
  Define a stack $\mathcal{M}$ whose $S$-points are maps $g:W\rightarrow S$ whose geometric fibres have the isomorphism type of some geometric fibre of $f$.
  Then if $\mathcal{M}$ (or any stack containing this one) is $\A^1$-invariant, $f$ is a fibration.
  
  Let $k$ be a number field, and call a smooth, proper variety over $k$ \emph{hyperbolic} if it is algebraically hyperbolic (i.e.~admits no nonconstant maps from $\mathbb{P}^1$ or an Abelian variety), arithmetically hyperbolic (i.e.~has only finitely many rational points over any finitely generated extension of $k$), and of general type.
  If Lang's conjectures \cite{lang1986hyperbolic} hold, the first two of these conditions are equivalent and imply the third.
  By \cite[Thm.~0.2]{moller2006special} and \cite[Thm.~1.2]{javanpeykar2021arithmetic}, the moduli stack of all hyperbolic varieties over $k$ is $\A^1$-local, so the preceding remarks apply to show that any flat family of hyperbolic varieties is an $\A^1$-fibration (in fact, an $\A^1$-cover).
  
\end{example}

\section{Plus construction} \label{plus/}

The Quillen plus construction defines a localisation of $\Space$ generated by collapsing the class $\mathcal{A}$ of acyclic spaces \cite[1.E.5]{farjoun2006cellular}.
Being a nullification, it is locally Cartesian.
The local/fibrant objects are the spaces with pro-solvable fundamental group.
In particular, the category of spaces with pro-solvable fundamental group is locally Cartesian closed.

\begin{para}[Universal acyclic extension]

  The $\mathcal{A}$-cellularisation $\mathrm{CW}_{\mathcal{A}}(X)$ of a pointed space $X$ can be constructed by iteratively taking fibres of the `Hurewicz map'
  \[
    X \rightarrow X\wedge H\Z \rightarrow \tau_k(X\wedge H\Z)\cong K((H_k(X,\Z),k)
  \]
  where we assume $X$ is acyclic up to degree $k-1$.
  (The first stage is special: it proceeds by passing to the cover of $X$ corresponding to the perfect radical of $\pi_1(X,x)$, which is not necessarily equal to $[\pi_1(X,x),\pi_1(X,x)]$ \cite[par.~2.2]{dror1972acyclic}).
  Hence $\Omega_x\mathrm{CW}_\mathcal{A}(X)$ may be characterised as a \emph{universal acyclic extension} of the $\infty$-group $\Omega_xX$.
  In particular, if $X=BG$ for $G$ a discrete group, $\pi_0(\Omega_x\mathrm{CW}_\mathcal{A}(X))$ is the universal central extension of the perfect radical of $G$.

\end{para}

\begin{corollary}[Classification of plus-fibrations]

  The category of plus-fibrations over a pointed connected space $(X,x)$ is equivalent to the category of $\Omega_xX$-modules equipped with a trivialisation over the universal acyclic extension of $\Omega_xX$; that is,
  \[
    \mathrm{Fib}\slice X \cong \Mod_{\Omega_xX}\times_{\Mod_{\Omega_x\mathrm{CW}_\mathcal{A}(X)}}\Space.
  \]
  for any connected pointed space $(X,x)$.
  
\end{corollary}
\begin{proof}

  By \cite[Rmk.~4.2]{hausmann1979acyclic}, we have a cofibre sequence $\mathrm{CW}_\mathcal{A}(X) \rightarrow X \rightarrow LX$, and hence by the Rezk-Lurie descent theorem an equivalence
  \[
    \Space\slice LX \cong \Space\slice X \times_{\Space\slice \mathrm{CW}_\mathcal{A}(X)}\Space. \qedhere
  \]

\end{proof}

\begin{corollary}

  Suppose $f:X\rightarrow Y$ is a plus-fibration. For each $y:Y$, the perfect radical of $\pi_1(Y,y)$ acts trivially on $\pi_0(X_y)$ and $\pi_{\geq 1}(X_y,x)/\pi_1(X_y,x)$ for all $x:X_y$.
  
\end{corollary}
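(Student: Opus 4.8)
The plan is to extract the action directly from the module description furnished by the Classification of plus-fibrations. By that corollary, the plus-fibration $f$ corresponds to an $\Omega_yY$-module whose underlying object is the fibre $X_y$, together with a trivialisation of its restriction along the map $\Omega_y\mathrm{CW}_\mathcal{A}(Y)\to\Omega_yY$ obtained by looping the inclusion $\mathrm{CW}_\mathcal{A}(Y)\to Y$. Unwinding, an $\Omega_yY$-module is the same datum as a local system of spaces on $Y$, and its underlying action is precisely the monodromy action of $\Omega_yY$ on $X_y$; the trivialisation then says that this monodromy becomes (coherently) trivial after restriction along $\Omega_y\mathrm{CW}_\mathcal{A}(Y)$.

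First I would pass to homotopy invariants. A class $\gamma\in\pi_0(\Omega_yY)=\pi_1(Y,y)$ acts on $X_y$ by a self-equivalence determined only up to pointed homotopy; hence it acts honestly on $\pi_0(X_y)$ and, after fixing a component and basepoint $x$, on the groups $\pi_n(X_y,x)$ only up to the conjugation action of $\pi_1(X_y,x)$ --- which is exactly why the statement quotients by $\pi_1(X_y,x)$. Triviality of the restricted module forces the action of $\pi_0(\Omega_y\mathrm{CW}_\mathcal{A}(Y))=\pi_1(\mathrm{CW}_\mathcal{A}(Y))$ on $\pi_0(X_y)$ and on $\pi_{\geq 1}(X_y,x)/\pi_1(X_y,x)$ to vanish, since a trivial action induces the trivial action on every homotopy invariant.

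It remains to identify the relevant subgroup of $\pi_1(Y,y)$. By the discussion of the universal acyclic extension, $\pi_0(\Omega_y\mathrm{CW}_\mathcal{A}(Y))$ is the universal (central) acyclic extension of the perfect radical of $\pi_1(Y,y)$; equivalently, the long exact sequence of the fibre sequence $\mathrm{CW}_\mathcal{A}(Y)\to Y\to LY$ identifies the image of $\pi_1(\mathrm{CW}_\mathcal{A}(Y))\to\pi_1(Y,y)$ with $\ker(\pi_1(Y,y)\to\pi_1(LY,y))$, which is the perfect radical since the plus construction kills exactly this subgroup. As the action of the restricted group factors through the action of its image, the perfect radical --- lying in that image --- acts trivially, as claimed. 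The main obstacle is the bookkeeping in the last two steps: verifying that the module-level trivialisation is compatible, through the comparison map $\Omega_y\mathrm{CW}_\mathcal{A}(Y)\to\Omega_yY$, with the geometric monodromy action of $\pi_1(Y,y)$, and that the resulting vanishing is correctly phrased on the $\pi_1(X_y,x)$-quotients rather than on the homotopy groups themselves.
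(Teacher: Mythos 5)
Your proposal is correct and follows exactly the route the paper intends (the paper states this corollary without an explicit proof, placing it directly after the classification of plus-fibrations): the fibre $X_y$ is the underlying $\Omega_yY$-module with monodromy action, the trivialisation over $\Omega_y\mathrm{CW}_\mathcal{A}(Y)$ kills the restricted action up to homotopy, and the image of $\pi_1(\mathrm{CW}_\mathcal{A}(Y))\to\pi_1(Y,y)$ is the perfect radical, so that subgroup acts by self-maps freely homotopic to the identity, hence trivially on $\pi_0(X_y)$ and on the $\pi_1(X_y,x)$-quotients of the higher homotopy groups. Your care about the basepoint/path ambiguity, which is precisely why the statement is phrased on $\pi_{\geq 1}(X_y,x)/\pi_1(X_y,x)$ rather than on the homotopy groups themselves, is the right bookkeeping.
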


\begin{example}[Fibrations from univalent families]

  A map $f:X\rightarrow Y$ of connected spaces whose homotopy fibre $F$ has both $\pi_1(F)$ and $\pi_0(\Aut(F))$ pro-solvable is a $+$-fibration.
  Indeed, using the long exact sequence associated to $\Aut(F)\rightarrow F\rightarrow F/\Aut(F)$, this condition implies that both $B\Aut(F)$ and $F/\Aut(F)$ are $+$-local.
  
  For example, this applies to spherical bundles of all dimensions, and hence towers thereof, because the group of connected components of $\Aut(S^n) = \{\pm 1\}\times_{\pi_nS^n}\Omega^nS^n$ is cyclic of order two.
  Hence, the plus construction preserves pullbacks of such maps.
  
\end{example}

\begin{example}
  
  On the other hand, nontrivial torus fibrations of dimension greater than 1 are usually not $+$-fibrations: the universal $n$-torus fibration is $BGL_n(\Z) \rightarrow BT^n\ltimes BGL_n(\Z)$, and $\mathrm{SL}_n(\Z)$ acts non-trivially on $\pi_1(T^n)=\Z^n$.
  
\end{example}

\begin{example}[Plus-fibrations and algebraic $K$-theory]

  Let $\phi:A\rightarrow B$ be an algebra homomorphism.
  If $B\GL(\phi)$ is a plus-fibration, then the resulting sequence of $K$-theory spaces
  \[
    B\GL(\ker\phi)\cong K(\ker\phi) \rightarrow K(A) \rightarrow K(B)
  \]
  is a fibre sequence; in particular, the $K$-theory of the non-unital algebra $\ker\phi$ computes the \emph{relative} $K$-theory $K(A,B)$.
  One also says that $\ker\phi$ \emph{satisfies excision in (connective) algebraic $K$-theory}.
  
  On the other hand, it is known that a non-unital algebra $I$ that satisfies excision in algebraic $K_1$ also satisfies $I^2=I$ (for example, by \cite{suslin1995excision} and the fact that $\mathrm{Tor}^{\Z\ltimes I}_1(\Z,\Z)=I/I^2$).
  The standard expression for expressing elementary matrices as commutators then applies: $E_{ij}(a) = [E_{ik}(b),E_{kj}(c)]$, where $a=bc$.
  Consequently, the subgroup of $\GL(I)$ generated by elementary matrices is perfect, and in particular, $\GL(I)$ cannot be pro-solvable.
  
  In other words, an algebra homomorphism \emph{never} induces a plus-fibration on $B\GL$, and this concept is not useful in algebraic $K$-theory of discrete algebras.
  
\end{example}
\printbibliography
\end{document}